\titleformat{\section}{\large\bfseries}{\thesection}{1em}{}
\titleformat{\subsection}[runin]{\bfseries}{\thesubsection.}{0.5em}{}[.]
\titleformat{\subsubsection}[runin]{\bfseries}{\thesubsubsection.}{0.4em}{}[.]
\DeclareMathOperator{\supp}{supp}
\def\d{\,\mathrm{d}}
\def\wideubar{\underaccent{{\cc@style\underline{\mskip10mu}}}}
\def\Wideubar{\underaccent{{\cc@style\underline{\mskip8mu}}}}
\def\widebar{\accentset{{\cc@style\underline{\mskip10mu}}}}
\def\Widebar{\accentset{{\cc@style\underline{\mskip8mu}}}}
\newcommand{\VERTiii}[1]{{\left\vert\kern-0.3ex\left\vert\kern-0.3ex\left\vert #1
 \right\vert\kern-0.3ex\right\vert\kern-0.3ex\right\vert}}
\newcommand{\VERT}{\vert\kern-0.3ex\vert\kern-0.3ex\vert}
\newcommand{\VERTl}{\left\vert\kern-0.3ex\left\vert\kern-0.3ex\left\vert}
\newcommand{\VERTr}{\right\vert\kern-0.3ex\right\vert\kern-0.3ex\right\vert}
\newcommand{\VERTbig}{\big\vert\kern-0.3ex\big\vert\kern-0.3ex\big\vert}
\newcommand{\VERTBig}{\Big\vert\kern-0.3ex\Big\vert\kern-0.3ex\Big\vert}
\DeclareFontFamily{OMX}{MnSymbolE}{}
\DeclareSymbolFont{MnLargeSymbols}{OMX}{MnSymbolE}{m}{n}
\DeclareFontShape{OMX}{MnSymbolE}{m}{n}{
 <-6>  MnSymbolE5 <6-7>  MnSymbolE6 <7-8>  MnSymbolE7 <8-9>  MnSymbolE8 <9-10> MnSymbolE9 <10-12> MnSymbolE10 <12->   MnSymbolE12
}{}
\DeclareFontShape{OMX}{MnSymbolE}{b}{n}{
 <-6>  MnSymbolE-Bold5 <6-7>  MnSymbolE-Bold6 <7-8>  MnSymbolE-Bold7 <8-9>  MnSymbolE-Bold8 <9-10> MnSymbolE-Bold9 <10-12> MnSymbolE-Bold10 <12->   MnSymbolE-Bold12
}{}
\let\llangle\@undefined
\let\rrangle\@undefined
\DeclareMathDelimiter{\llangle}{\mathopen}%
{MnLargeSymbols}{'164}{MnLargeSymbols}{'164}
\DeclareMathDelimiter{\rrangle}{\mathclose}%
{MnLargeSymbols}{'171}{MnLargeSymbols}{'171}
\numberwithin{equation}{section}
 \newtheorem{lemma}{Lemma}[section]
 \newtheorem{theorem}{Theorem}[section]
 \newtheorem{corollary}[lemma]{Corollary}
 \newtheorem{remark}{Remark}[section]
\newcommand{\R}{{\mathbb R}}
\def\ds{\displaystyle}
\def\duno{\partial_1}
\def\dquno{\partial^2_{11} }
\def\ddue{\partial_2}
\def\di{\partial_i}
\def\dj{\partial_j}
\def\dt{\partial_t}
\def\dqt{\partial^2_{tt}}
\def\F{\mathcal F}
\def\M{\mathsf{M}}
\def\Lc{\mathcal L}
\def\Rc{\mathcal R}
\def\V{\mathcal V}
\def\vv{\bf v}
\def\ww{\bf w}
\def\VV{\bf V}
\begin{document}
%--------------------------------------------------------------------------------------
\title{{\bf Anisotropic regularity of linearized\\compressible vortex sheets}}
\author{
%{\sc Alessandro Morando}\thanks{e-mail: alessandro.morando@unibs.it}\;,
{\sc Paolo Secchi}\thanks{e-mail: paolo.secchi@unibs.it}\;,
%{\sc Paola Trebeschi}\thanks{e-mail: paola.trebeschi@unibs.it}\\
{\footnotesize DICATAM, 
Universit\`a di Brescia, Via Valotti 9, 25133 Brescia, Italy}
}

%\date{}

\maketitle
% \thispagestyle{style}                             % 当前页的页面式样, style=empty 当前页不显示页眉页脚, style=first
%--------------------------------------------------------------------------------------
%abstract------------------------------------------------------------------------------
\begin{abstract}
We are concerned with supersonic vortex sheets for the Euler equations of compressible inviscid fluids in two space dimensions. For the problem with constant coefficients, in \cite{MST18} the authors have derived a pseudo-differential equation which describes the time evolution of the discontinuity front of the vortex sheet. In agreement with the classical stability analysis, the problem is weakly stable if $|[v\cdot\tau]|>2\sqrt{2}\,c$, and
the well-posedness was obtained in standard weighted Sobolev spaces. 
 The aim of the present paper is to improve the result of \cite{MST18}, by showing the existence of the solution in function spaces with some additional weighted anisotropic regularity in the frequency space.

\vspace{2mm}
\noindent{\bf Keywords:} Compressible Euler equations, vortex sheet, contact discontinuities,
weak stability, loss of derivatives, linear stability.

\vspace{2mm}
 \noindent{\bf Mathematics Subject Classification:}
 35Q35,  % (1991-now) PDEs in connection with fluid mechanics
76N10,  %(1980-now) Existence, uniqueness, and regularity theory
76E17,  %(2000-now) Interfacial stability and instability
35L50 %Initial value problems for higher-order hyperbolic equations

\end{abstract}

\tableofcontents

%%%%%%%%%%%%%%%%%%%%%%
\section{Introduction}
\label{sect1}

We are concerned with the time evolution of vortex sheets for the Euler equations describing the motion of a compressible fluid.
Vortex sheets are interfaces between two incompressible or compressible flows across which there is a discontinuity in fluid velocity. Across a vortex sheet, the tangential velocity field has a jump, while the normal component of the flow velocity is continuous. The discontinuity in the tangential velocity field creates a concentration of vorticity along the interface. In particular, compressible vortex sheets are contact discontinuities to the Euler equations for compressible fluids and as such they are fundamental waves which play an important role in the study of general entropy solutions to multidimensional hyperbolic systems of conservation laws.

It was observed in \cite{M58MR0097930,FM63MR0154509}, by the normal mode analysis, that rectilinear vortex sheets for isentropic compressible fluids in two space dimensions are linearly stable when the Mach number $\M>\sqrt{2}$ and are violently unstable when $\mathsf{M}<\sqrt{2}$, while planar vortex sheets are always violently unstable in three space dimensions. This kind of instabilities is the analogue of the Kelvin--Helmholtz instability for incompressible fluids.
Artola and Majda \cite{AM87MR914450} studied certain instabilities of two-dimensional supersonic vortex sheets by analyzing the interaction with highly oscillatory waves through geometric optics. A rigorous mathematical theory on nonlinear stability and local-in-time existence of two-dimensional supersonic vortex sheets was first established by Coulombel--Secchi \cite{CS08MR2423311,CS09MR2505379} based on their linear stability results in \cite{CS04MR2095445} and a Nash--Moser iteration scheme.
We refer the reader to \cite{MST18} for more references.

The vortex sheet motion is a nonlinear hyperbolic problem with a characteristic free boundary.
The analysis of the linearized problem in \cite{CS04MR2095445} shows that the so-called Kreiss-Lopatinski\u{\i} condition holds in a weak sense, thus one can only obtain an \emph{a priori} energy estimate with a loss of derivatives with respect to the source terms. Because of this fact, the existence of the solution to the nonlinear problem is obtained in \cite{CS08MR2423311} by a Nash-Moser iteration scheme, with a loss of the regularity of the solution with respect to the initial data.

At the best of our knowledge the approach of \cite{CS04MR2095445,CS08MR2423311} is the only one known up to now, while it would be interesting to have different methods of proof capable to give the existence and possibly other properties of the solution.

In particular, the location of the discontinuity front of the vortex sheet is obtained through the jump conditions at the front, see \eqref{RH}, and is implicitly determined by the fluid motion in the interior regions, i.e. far from the front.
On the contrary, it would be interesting to find an \lq\lq{explicit}\rq\rq evolution equation for the vortex sheet, i.e. for the discontinuity front. In this regard we recall that in case of irrotational, incompressible vortex sheets, the location of the discontinuity front is described by the Birchhoff-Rott equation, see \cite{MR1688875,MB02MR1867882,MP94MR1245492}, whose solution is sufficient to give a complete description of the fluid motion through the Biot-Savart law.

In this paper we are concerned with supersonic vortex sheets for the Euler equations of compressible inviscid fluids in two space dimensions. For the problem with constant coefficients, in \cite{MST18} the authors derived an evolution pseudo-differential equation for the discontinuity front of the vortex sheet. In agreement with the classical stability analysis \cite{FM63MR0154509,M58MR0097930}, if the Mach number $\mathsf{M}<\sqrt{2}$ the symbol is elliptic and the problem is ill-posed. On the contrary, if $\mathsf{M}>\sqrt{2}$, then the problem is weakly stable, and the authors proved the well-posedness of the problem, by showing the existence of the solution in weighted Sobolev spaces.

The aim of the present paper is to improve the result of \cite{MST18} by showing the existence of the solution in a function space with some additional weighted anisotropic regularity in the frequency space. The main result of the paper is given by Theorem \ref{teoexist} in Section \ref{sezione3}. We hope that the introduction of such weighted anisotropic Sobolev will be an useful tool for the analysis of nonlinear compressible vortex sheets. This will be the object of a future investigation.

%%%%%%%%%%%%%%%%%%%%%%%%%%%%%%%%%%%%%
\subsection{The Eulerian description}

We consider the isentropic Euler equations in the whole plane $\R^2$. Denoting by ${\bf v}=(v_1,v_2) \in \R^2$ the
velocity of the fluid, and by $\rho$ its density, the equations read:
\begin{equation}
\label{euler}
\begin{cases}
\dt \rho +\nabla \cdot (\rho \, {\bf v}) =0 \, ,\\
\dt (\rho \, {\bf v}) +\nabla \cdot
(\rho \, {\bf v} \otimes {\bf v}) +\nabla \, p =0 \, ,
\end{cases}
\end{equation}
where $p=p(\rho)$ is the pressure law. In all this paper $p$ is a $C^\infty$ function of $\rho$,
defined on $]0,+\infty[$, and such that $p'(\rho)>0$ for all $\rho$. The speed of sound $c(\rho)$ in the
fluid is defined by the relation:
\begin{equation*}
\forall \, \rho>0 \, ,\quad c(\rho) :=\sqrt{p'(\rho)} \, .
\end{equation*}
It is a well-known fact that, for such a pressure law, \eqref{euler} is a strictly hyperbolic system in
the region $(t,x)\in\, ]0,+\infty[ \, \times \R^2$, and \eqref{euler} is also symmetrizable.

%%%
We are interested in solutions of \eqref{euler} that are smooth on either side of a smooth hypersurface  $\Gamma(t):=\{x=(x_1,x_2)\in \R^2 : F(t,x)=0\}=\{x_2=f(t,x_1)\}$ for each $t$ and that satisfy
suitable jump conditions at each point of the front $\Gamma (t)$.

Let us denote $\Omega^\pm(t):=\{(x_1,x_2)\in \R^2 :x_2\gtrless f(t,x_1)\}$. Given any
function $g$ we denote $g^\pm=g$ in $\Omega^\pm(t)$ and $[g]=g^+_{|\Gamma}-g^-_{|\Gamma}$ the jump across
$\Gamma (t)$.

We look for smooth solutions $({\vv}^\pm,\rho^\pm)$ of \eqref{euler} in $\Omega^\pm(t)$ and such that, at each time $t$,
the tangential velocity is the only quantity that experiments a jump across the curve $\Gamma (t)$. (Tangential
should be understood as tangential with respect to $\Gamma (t)$). The pressure and the normal velocity should be
continuous across $\Gamma (t)$. For such solutions, the jump conditions across $\Gamma(t)$ read:
\begin{equation*}
\sigma ={\vv}^\pm\cdot n  \, ,\quad [p]=0 \quad {\rm on } \;\Gamma (t) \, .
\end{equation*}
Here $n=n(t)$ denotes the outward unit normal on $\partial\Omega^-(t)$ and $\sigma$ denotes the velocity of
propagation of the interface $\Gamma (t)$. With our parametrization of $\Gamma (t)$, an equivalent formulation
of these jump conditions is
\begin{equation}
\label{RH}
\dt f ={\bf v}^+\cdot N ={\bf v}^-\cdot N \, ,\quad p^+ =p^-  \quad {\rm on }\;\Gamma (t) \, ,
\end{equation}
where
\begin{equation}\label{defN}
N=(-\duno f, 1)
\end{equation}
and $p^\pm=p(\rho^\pm)$. Notice that the function $f$ describing the discontinuity front is part of
the unknown of the problem, i.e. this is a free boundary problem.

For smooth solutions system \eqref{euler} can be written in the equivalent form
\begin{equation}
\label{euler1}
\begin{cases}
\dt \rho +({\bf v}\cdot\nabla) \rho +\rho \, \nabla\cdot{\bf v} =0 \, ,\\
\rho \,(\dt  {\bf v} +({\bf v}\cdot\nabla)
 {\bf v} ) +\nabla \, p =0 \, .
\end{cases}
\end{equation}
Because $ p'(\rho)>0 $, the function $p= p(\rho ) $ can be inverted and we can write $ \rho=\rho(p) $. Given a positive constant $ \bar{\rho}>0 $, we introduce the quantity $ P(p)=\log(\rho(p)/\bar{\rho}) $ and consider $ P $ as a new unknown. In terms of $ (P,\vv) $, the system \eqref{euler1} equivalently reads
\begin{equation}
\label{euler2}
\begin{cases}
\dt P +{\bf v}\cdot\nabla P + \nabla\cdot{\bf v} =0 \, ,\\
\dt  {\bf v} +({\bf v}\cdot\nabla)
{\bf v}  +c^2\,\nabla \, P =0 \, ,
\end{cases}
\end{equation}
where now the speed of sound is considered as a function of $ P,$ that is $ c=c(P) $. Thus our problem reads
\begin{equation}
\label{euler3}
\begin{cases}
\dt P^\pm +{\bf v}^\pm\cdot\nabla P^\pm + \nabla\cdot{\bf v}^\pm =0 \, ,\\
\dt  {\bf v}^\pm +({\bf v}^\pm\cdot\nabla)
{\bf v}^\pm  +c^2_\pm\,\nabla \, P^\pm =0 \, , \qquad {\rm in }\;  \Omega^\pm(t),
\end{cases}
\end{equation}
where we have set $ c_\pm=c(P^\pm) $.
 The jump conditions \eqref{RH}
 % and the initial conditions \eqref{initial} 
 take the new form
\begin{equation}
\label{RH2}
\dt f ={\bf v}^+\cdot N ={\bf v}^-\cdot N \, ,\quad P^+ =P^-  \quad {\rm on }\;\Gamma (t) \, ,
\end{equation}

%%%%%%%%%%%%%%%%%%%%%%%%%%%%%%%%%%%%%%%%%%
%%%%%%%%%%%%%%%%%%%%%%%%%%%%%%%%%%%%%%%%%

\section{Preliminary results}

For the reader's convenience we recall the main steps of the derivation of the evolution equation  for the discontinuity front of the vortex sheet with constant coefficients, obtained in \cite{MST18}. 

Given functions $ {\vv}^\pm, P^\pm$, we set
\begin{equation}
\begin{array}{ll}\label{defZ}
Z^\pm:=\dt {\vv}^\pm
+( {\vv}^\pm \cdot \nabla) {\vv}^\pm .
\end{array}
\end{equation}
Next, we study the behavior of $Z^\pm$  at $\Gamma(t)$. As in \cite{SWZ} we define
\begin{equation}
\begin{array}{ll}\label{deftheta}
\theta(t,x_1):= {\vv}^\pm(t,x_1,f(t,x_1))\cdot N(t,x_1),
\end{array}
\end{equation}
for $N$ given in \eqref{defN}.

\begin{lemma}[\cite{MST18}]\label{lemmaN}
Let $ f, {\vv}^\pm,  \theta$ be such that
\begin{equation}
\begin{array}{ll}\label{dtfthetavN}
\dt f=\theta= {\vv}^\pm\cdot N  \,  \qquad {\rm on }\;  \Gamma(t),
\end{array}
\end{equation}
and let $Z^\pm$ be defined by \eqref{defZ}.
Then
\begin{equation}
\begin{array}{ll}\label{applN}
Z^+ \cdot N
\ds = \dt\theta + 2 v_1^+\duno\theta + (v_1^+)^2 \dquno f \,,\\
Z^- \cdot N
\ds = \dt\theta + 2 v_1^-\duno\theta + (v_1^-)^2 \dquno f
 \quad {\rm on }\;  \Gamma(t) .
\end{array}
\end{equation}
\end{lemma}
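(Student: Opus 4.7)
The plan is to reduce everything to tangential differentiations on $\Gamma(t)$ and exploit the boundary condition $\dt f=\theta=\vv^\pm\cdot N$ to identify a convenient \emph{one-dimensional} material derivative. Introduce the traces
\begin{equation*}
w_j^\pm(t,x_1):=v_j^\pm(t,x_1,f(t,x_1)), \qquad j=1,2,
\end{equation*}
so that, by the chain rule,
\begin{equation*}
\dt w_j^\pm = (\dt v_j^\pm)_{|\Gamma} + (\dt f)(\ddue v_j^\pm)_{|\Gamma}, \qquad \duno w_j^\pm = (\duno v_j^\pm)_{|\Gamma} + (\duno f)(\ddue v_j^\pm)_{|\Gamma}.
\end{equation*}

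The first step is to show that on $\Gamma(t)$ the full two-dimensional material derivative $\dt+{\vv}^\pm\cdot\nabla$ reduces, when applied to components of ${\vv}^\pm$, to the one-dimensional operator $\dt + w_1^\pm \duno$ acting on traces. Indeed, combining the two identities above gives
\begin{equation*}
\dt w_j^\pm + w_1^\pm \duno w_j^\pm = (\dt v_j^\pm + v_1^\pm \duno v_j^\pm)_{|\Gamma} + (\dt f + v_1^\pm \duno f)(\ddue v_j^\pm)_{|\Gamma},
\end{equation*}
and the last bracket equals $v_2^\pm{}_{|\Gamma}=w_2^\pm$ by \eqref{dtfthetavN} (since $\theta=-v_1^\pm\duno f+v_2^\pm$ and $\dt f=\theta$). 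Hence $Z_j^\pm{}_{|\Gamma}=\dt w_j^\pm+w_1^\pm\duno w_j^\pm$.

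The second step contracts with $N=(-\duno f,1)$. One rewrites
\begin{equation*}
Z^\pm\cdot N = (\dt+w_1^\pm\duno)w_2^\pm - \duno f\,(\dt+w_1^\pm\duno)w_1^\pm = (\dt+w_1^\pm\duno)(w_2^\pm - w_1^\pm \duno f) + w_1^\pm(\dt+w_1^\pm\duno)\duno f.
\end{equation*}
Since $w_2^\pm-w_1^\pm\duno f=\theta$ and $\dt f=\theta$ imply $\dt\duno f=\duno\theta$, the right-hand side collapses to $\dt\theta+2w_1^\pm\duno\theta+(w_1^\pm)^2\dquno f$, which is precisely \eqref{applN} after identifying $w_1^\pm$ with $v_1^\pm$ on $\Gamma(t)$.

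The only delicate point is bookkeeping: distinguishing the trace of a derivative from the derivative of a trace and using the boundary relation \eqref{dtfthetavN} at exactly the right moment so that the $\ddue v_j^\pm$ terms cancel. Once the identification $Z^\pm{}_{|\Gamma}=(\dt+w_1^\pm\duno)w^\pm$ is in place, the remainder is an algebraic rearrangement with no genuine obstacle.
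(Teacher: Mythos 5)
Your proof is correct. The paper itself does not prove this lemma --- it is quoted from \cite{MST18} without an argument --- so there is nothing internal to compare against, but your computation is sound and self-contained: the identity $\dt f+v_1^\pm\duno f=v_2^\pm$ on $\Gamma(t)$ (which is exactly \eqref{dtfthetavN} unpacked) is what kills the $\ddue v_j^\pm$ terms and yields $Z_j^\pm{}_{|\Gamma}=(\dt+w_1^\pm\duno)w_j^\pm$, and the subsequent rearrangement using $w_2^\pm-w_1^\pm\duno f=\theta$ and $\dt\duno f=\duno\theta$ correctly produces $\dt\theta+2v_1^\pm\duno\theta+(v_1^\pm)^2\dquno f$. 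This is the standard route (it is also the computation behind the analogous lemma in \cite{SWZ}), and your explicit care in distinguishing the trace of a derivative from the derivative of a trace is exactly the point where such arguments usually go wrong; here it does not.
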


\subsection{A first equation for the front}

We take the scalar product of the equation for $\vv^\pm$ in \eqref{euler3}, evaluated at $\Gamma(t)$, with the vector $N$. We get
\begin{equation*}
\big\{ Z^\pm  + c^2_\pm  \nabla P^\pm\big\} \cdot N =0\, \quad {\rm on} \; \Gamma(t) \, ,
\end{equation*}
and applying Lemma \ref{lemmaN} we obtain
\begin{equation}
\begin{array}{ll}\label{puntoN}
\ds  \dt\theta + 2 v_1^\pm\duno\theta + (v_1^\pm)^2 \dquno f + c^2_\pm  \nabla P^\pm \cdot N =0 \,\quad {\rm on} \; \Gamma(t) \, .
\end{array}
\end{equation}
Now we apply an idea from \cite{SWZ}. We take the {\it sum} of the "+" and "-" equations in \eqref{puntoN} to obtain
\begin{multline}\label{puntoN2}
\ds  2\dt\theta + 2 (v_1^++v_1^-)\duno\theta + ( (v_1^+)^2+(v_1^-)^2) \dquno f
+ c^2 \nabla (P^+ +  P^-)  \cdot N =0 \,\quad {\rm on} \; \Gamma(t) \, ,
\end{multline}
where we have denoted the common value at the boundary $c=c_{\pm|\Gamma(t)}=c(P^\pm_{|\Gamma(t)})$.
Next, following again \cite{SWZ}, we introduce the quantities
\begin{equation}
\label{defwV}
{\ww}=(w_1,w_2):=({\vv}^++{\vv}^-)/2, \qquad {\VV}=(V_1,V_2):=({\vv}^+-{\vv}^-)/2.
\end{equation}
Sustituting \eqref{defwV} in \eqref{puntoN2} gives
\begin{equation}\label{puntoN3}
\ds  \dt\theta + 2 w_1\duno\theta + (w_1^2 + V_1^2 )\dquno f
+\frac{c^2}2  \nabla (P^+ +  P^-)  \cdot N =0 \,\qquad {\rm on} \;  \Gamma(t) \, .
\end{equation}
Finally we substitute the boundary condition $ \theta=\dt f $ in \eqref{puntoN3} and we obtain
\begin{equation}\label{puntoN4}
\ds  \dqt f + 2 w_1\duno\dt f + (w_1^2 + V_1^2 )\dquno f
+\frac{c^2}2  \nabla (P^+ +  P^-)  \cdot N =0 \,\qquad {\rm on} \;  \Gamma(t) \, .
\end{equation}
\eqref{puntoN4} is a second order equation for the front $f$. However, it is nonlinearly coupled at the highest order with the other unknowns $ ({\vv}^\pm,P^\pm) $ of the problem through the last term in the left side of \eqref{puntoN4}. In order to find an evolution equation for $f,$ it is important to isolate the dependence of $f$ on $P^\pm$ at the highest order, i.e. up to lower order terms in $ ({\vv}^\pm,P^\pm) $.

Notice that \eqref{puntoN4} can also be written in the form
\begin{equation}\label{puntoN5}
\ds  (\dt  +  w_1\duno)^2 f +  V_1^2 \dquno f
+\frac{c^2}2  \nabla (P^+ +  P^-)  \cdot N -(\dt w_1+w_1\duno w_1)\duno f=0 \,\qquad {\rm on} \; \Gamma(t) \, .
\end{equation}

\subsection{The wave problem for the pressure}

Applying the operator $ \dt+{\vv}\cdot\nabla $ to the first equation of \eqref{euler2} and $ \nabla\cdot $ to the second one gives
\begin{equation*}\label{}
\begin{cases}
(\dt  +{\bf v}\cdot\nabla)^2 P + (\dt  +{\bf v}\cdot\nabla)\nabla\cdot{\bf v} =0 \, ,\\
\nabla\cdot(\dt  +{\bf v}\cdot\nabla)
{\bf v}  +\nabla\cdot(c^2\,\nabla \, P) =0 \, .
\end{cases}
\end{equation*}
The difference of the two equations gives the wave-type equation\footnote[1]{$[A,B]$ denotes the commutator $AB-BA$. In the paper we adopt the Einstein convention over repeated indices.}
\begin{equation}\label{wave0}
(\dt  +{\bf v}\cdot\nabla)^2 P - \nabla\cdot(c^2\,\nabla \, P) = -[\dt  +{\bf v}\cdot\nabla, \nabla\cdot\,]{\vv}=\di v_j\dj v_i.
\end{equation}
We repeat the same calculation for both $ ({\vv}^\pm,P^\pm) $.
As for the behavior at the boundary, we already know that
\begin{equation}\label{bc1}
 [P]=0 \, , \qquad
{\rm on }\;  \Gamma(t) \, .
\end{equation}
As a second boundary condition it is natural to add a condition involving the normal derivatives of $ P^\pm. $ We proceed as follows: instead of the {\it sum} of the equations \eqref{puntoN} as for \eqref{puntoN2}, we take the {\it difference} of the "+" and "-" equations in \eqref{puntoN} to obtain the jump of the normal derivatives $ \nabla P^\pm  \cdot N $,
\begin{equation}
\label{jumpQ}
[c^2 \nabla P \cdot N] =-[2 v_1\duno\theta + v_1^2 \dquno f] \qquad {\rm on} \; \Gamma(t) \, .
\end{equation}
Recalling that $ \theta=\dt f $, we compute
\begin{equation}\label{jumpQ1}
[2 v_1\duno\theta + v_1^2 \dquno f] = 4 V_1(\dt+w_1\duno)\duno f .
\end{equation}
Thus, from \eqref{jumpQ}, \eqref{jumpQ1} we get
\begin{equation}\label{bc2}
[c^2 \nabla P \cdot N] =-4 V_1(\dt+w_1\duno)\duno f \qquad {\rm on} \;  \Gamma(t) \, .
\end{equation}
Collecting \eqref{wave0} for $P^\pm$, \eqref{bc1}, \eqref{bc2} gives the coupled problem for the pressure
\begin{equation}\label{wave}
\begin{cases}
(\dt  +{\vv }^\pm\cdot\nabla)^2 P^\pm - \nabla\cdot(c^2_\pm\,\nabla \, P^\pm) =\F^\pm  &
{\rm in }\;  \Omega^\pm(t) \, ,\\
 [P]=0 \, ,\\
[c^2 \nabla P \cdot N] =-4 V_1(\dt+w_1\duno)\duno f & {\rm on} \; \Gamma(t) \, ,
\end{cases}
\end{equation}
where
\begin{equation*}\label{key}
\F^\pm:=\di v_j^\pm \dj v_i^\pm. 
\end{equation*}
Notice that $ \F $ can be considered a lower order term in the second order differential equation for $ P^\pm $, differently from the right-hand side of the boundary condition for the jump of the normal derivatives, which is of order two in $ f. $

\section{The coupled problem \eqref{puntoN5}, \eqref{wave} with constant coefficients. The main result}\label{sezione3}

We consider a problem obtained by linearization of equation \eqref{puntoN5} and system \eqref{wave} about the constant velocity ${\vv }^\pm=(\pm v,0)$, $v>0$, so that $w_1=0, V_1=v$, constant pressure $P^+=P^-$, and flat front $\Gamma=\{x_2=0\}$, so that $N=(0,1)$, that is we study the equations
\begin{equation}\label{puntoN6}
\ds  (\dqt    +  v^2 \dquno) f
+\frac{c^2}2 \ddue (P^+ +  P^-)   =0 \,\qquad {\rm if} \; x_2=0 \, ,
\end{equation}
\begin{equation}\label{wave2}
\begin{cases}
(\dt  \pm v \duno)^2 P^\pm - c^2\Delta \, P^\pm =\F^\pm  \quad &
{\rm if }\; x_2\gtrless0 \, ,\\
[P]=0 \, ,\\
[c^2\ddue P ] =-4 v\dt\duno f & {\rm if} \; x_2=0 \, .
\end{cases}
\end{equation}
In \eqref{puntoN6}, \eqref{wave2}, $c>0$ is a constant and $\F^\pm$ is a given source term. \eqref{puntoN6}, \eqref{wave2} form a coupled system for $f$ and $P^\pm$, obtained by retaining the highest order terms of \eqref{puntoN5} and \eqref{wave}.  We are interested to derive from \eqref{puntoN6}, \eqref{wave2} an evolution equation for the front $f$.

Given $\gamma\ge1$, we introduce $ \widetilde{f}:=e^{-\gamma t}f,\widetilde{P}^\pm:=e^{-\gamma t}P^\pm, \widetilde{\F}^\pm:=e^{-\gamma t}\F^\pm $ and consider the equations
\begin{equation}\label{puntoN7}
\ds  (\gamma+\dt )^2 \widetilde{f} +  v^2 \dquno \widetilde{f}
+\frac{c^2}2 \ddue (\widetilde{P}^+ +  \widetilde{P}^-)   =0 \,\qquad {\rm if} \; x_2=0 \, ,
\end{equation}
\begin{equation}\label{wave3}
\begin{cases}
(\gamma+\dt  \pm v\duno)^2 \widetilde{P}^\pm - c^2\Delta \, \widetilde{P}^\pm =\widetilde{\F}^\pm  \quad &
{\rm if }\; x_2\gtrless0 \, ,\\
[\widetilde{P}]=0 \, ,\\
[c^2\ddue \widetilde{P} ] =-4 v(\gamma+\dt)\duno \widetilde{f} & {\rm if} \; x_2=0 \, .
\end{cases}
\end{equation}
System \eqref{puntoN7}, \eqref{wave3} is equivalent to \eqref{puntoN6}, \eqref{wave2}. Let us denote by $ \widehat{f},\widehat{P}^\pm,\widehat{\F}^\pm $ the Fourier transforms of $ \widetilde{f},\widetilde{P}^\pm, \widetilde{\F}^\pm $ in $(t,x_1)$, with dual variables denoted by $(\delta,\eta)$, and set $\tau=\gamma+i\delta$. 
From \cite{MST18} we have the following result:
\begin{theorem}[\cite{MST18}]\label{teo_equ}
Let $\widetilde{\F}^\pm$ be such that
\begin{equation}\label{cond_infF}
\lim\limits_{x_2\to+\infty}\widehat{\F}^\pm(\cdot,\pm x_2)= 0 \, .
\end{equation}
Assume that $\widetilde{f},\widetilde{P}^\pm$ is a solution of \eqref{puntoN7}, \eqref{wave3} with
\begin{equation}\label{cond_inf}
\lim\limits_{x_2\to+\infty}\widehat{P}^\pm(\cdot,\pm x_2)= 0 \, .
\end{equation}
Then $f$ solves the second order pseudo-differential equation
\begin{equation}\label{equ_f}
\ds \left( \tau^2  +  v^2 \eta^2
\left(8\left( \frac{\tau/c}{\mu^+(\tau,\eta)+\mu^-(\tau,\eta)}\right)^2  -1 \right)\right) \widehat{f} + \frac{\mu^+\mu^-}{\mu^++\mu^-}\,M =0   \, ,
\end{equation}
where $\mu^\pm:=\sqrt{\left(\frac{\tau\pm iv\eta}{c}\right)^2+\eta^2}$ is
such that
$
\Re\mu^\pm>0$ if $\Re\tau>0$,
and
\begin{equation}\label{def_M}
M=M(\tau,\eta):= \frac{1}{\mu^+}\int_{0}^{+\infty}e^{-\mu^+ y}\widehat{\F}^+ (\cdot, y)\, dy -
\frac{1}{\mu^-}\int_{0}^{+\infty}e^{-\mu^- y}\widehat{\F}^- (\cdot,- y)\, dy \, .
\end{equation}
\end{theorem}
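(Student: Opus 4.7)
The plan is to Fourier transform \eqref{puntoN7}--\eqref{wave3} in $(t,x_1)$, reducing the wave equation to an ODE in $x_2$; solve the ODE with the decay prescribed by \eqref{cond_inf}--\eqref{cond_infF}; and then eliminate the two free amplitudes via the three scalar relations available at $x_2=0$ (the two transmission conditions plus the front equation \eqref{puntoN7}).

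With dual variables $(\delta,\eta)$ and $\tau=\gamma+i\delta$, the Fourier transform of \eqref{wave3}$_1$ reads, on $\pm x_2>0$,
\begin{equation*}
\ddue^2\widehat{P}^\pm-(\mu^\pm)^2\widehat{P}^\pm=-\widehat{\F}^\pm/c^2,
\end{equation*}
with $\mu^\pm$ as in the statement and $\Re\mu^\pm>0$. Using the real-line Green's function $-e^{-\mu^\pm|x_2-y|}/(2\mu^\pm)$ of $\ddue^2-(\mu^\pm)^2$ to build a variation-of-constants particular solution $\widehat{P}^\pm_p$, one can write the general solution as $\widehat{P}^\pm=\alpha^\pm e^{\mp\mu^\pm x_2}+\widehat{P}^\pm_p$; here \eqref{cond_infF} gives $\widehat{P}^\pm_p\to 0$ at $\pm\infty$, while \eqref{cond_inf} rules out the mode $e^{\pm\mu^\pm x_2}$, leaving $\alpha^\pm=\alpha^\pm(\tau,\eta)$ as the only free scalar amplitudes. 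Evaluating at $x_2=0$ yields
\begin{equation*}
\widehat{P}^\pm(\cdot,0)=\alpha^\pm+\frac{I^\pm}{2\mu^\pm c^2},\qquad \ddue\widehat{P}^\pm(\cdot,0)=\mp\mu^\pm\alpha^\pm\pm\frac{I^\pm}{2c^2},
\end{equation*}
where $I^\pm:=\int_0^{+\infty}e^{-\mu^\pm y}\widehat{\F}^\pm(\cdot,\pm y)\,dy$, so that $M=I^+/\mu^+-I^-/\mu^-$.

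Inserting these into the Fourier versions of the three front relations — $[\widehat{P}]=0$, $[c^2\ddue\widehat{P}]=-4iv\tau\eta\,\widehat{f}$, and $(\tau^2-v^2\eta^2)\widehat{f}+(c^2/2)\big(\ddue\widehat{P}^+(\cdot,0)+\ddue\widehat{P}^-(\cdot,0)\big)=0$ — gives a $3\times 3$ linear system in $(\alpha^+,\alpha^-,\widehat{f})$. Solving the first two equations for $\alpha^\pm$ and substituting into the third eliminates the amplitudes and produces an equation whose $\widehat{f}$-coefficient reads $\tau^2-v^2\eta^2+2iv\tau\eta(\mu^--\mu^+)/(\mu^++\mu^-)$. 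The main — and essentially only — non-routine step will be to rewrite this via the elementary identity
\begin{equation*}
(\mu^-)^2-(\mu^+)^2=\frac{(\tau-iv\eta)^2-(\tau+iv\eta)^2}{c^2}=-\frac{4iv\tau\eta}{c^2},
\end{equation*}
which, upon multiplying numerator and denominator by $\mu^++\mu^-$, converts the fraction into $8v^2\eta^2(\tau/c)^2/(\mu^++\mu^-)^2$ and recovers the bracketed symbol in \eqref{equ_f}. A parallel bookkeeping of the source contributions — the $I^\pm$-terms from the third equation together with those inherited from $M$ via the $\alpha^\pm$-elimination — collapses into the single term $\mu^+\mu^- M/(\mu^++\mu^-)$, completing the proof.
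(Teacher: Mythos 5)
Your proposal is correct and follows the same strategy the paper indicates in its one-line proof (explicit resolution of the transformed ODE in $x_2$ under the decay conditions, then substitution into \eqref{puntoN7}); your Green's-function particular solution is just an equivalent packaging of the Laplace transform in $x_2$ used in \cite{MST18}. The algebra checks out: the $\widehat f$-coefficient $\tau^2-v^2\eta^2+2iv\tau\eta(\mu^--\mu^+)/(\mu^++\mu^-)$ reduces to $\Sigma$ via $(\mu^-)^2-(\mu^+)^2=-4iv\tau\eta/c^2$, and the source terms do collapse to $\mu^+\mu^- M/(\mu^++\mu^-)$.
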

\begin{proof}
Equation \eqref{equ_f} follows from the explicit resolution of system \eqref{wave3} by Laplace-Fourier transform in $x_2$ and substitution in \eqref{puntoN7}. See \cite{MST18} for details.
\end{proof}
From the definition we see that the roots $ \mu^\pm $ are homogeneous functions of degree 1 in $(\tau, \eta)$. Therefore, the ratio $ (\tau/c)^2/(\mu^++\mu^-)^2 $ is homogeneous of degree 0. It follows that the symbol of \eqref{equ_f} is a homogeneous function of degree 2. In this sense \eqref{equ_f} represents a second order pseudo-differential equation for $f$.

In \cite{MST18} the authors prove the existence of the solution to equation \eqref{equ_f} in suitable weighted Sobolev spaces. The aim of the present paper is to improve the result of \cite{MST18} by showing the existence of the solution in a function space with some additional weighted anisotropic regularity in the frequency space. 

Our main result is given by the following result.

\begin{theorem}\label{teoexist}
	Assume $\mathsf{M}:=\frac{v}{c}>\sqrt{2}$, and let $ \F^+\in L^2(\R^+;H^s_\gamma(\R^2)) , \F^-\in L^2(\R^-;H^s_\gamma(\R^2))$. There exists a unique solution $f\in H^{s+1,\sigma}_\gamma(\R^2)$ of equation \eqref{equ_f}  satisfying the estimate
	\begin{equation}\label{stimafF1}
	\gamma \|f\|^2_{H^{s+1,\sigma}_\gamma(\R^2)} \le C\left( \|\F^+\|^2_{L^2(\R^+;H^s_\gamma(\R^2))}+\|\F^-\|^2_{L^2(\R^-;H^s_\gamma(\R^2))}\right) , \qquad\forall \gamma\ge1\, ,
	\end{equation}
	for a suitable constant $C>0$ independent of $\F^\pm$ and $\gamma$.
\end{theorem}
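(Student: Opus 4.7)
My plan is to solve \eqref{equ_f} explicitly for $\widehat{f}$, estimate the source term $M$ by Plancherel, establish an anisotropic lower bound on the symbol of \eqref{equ_f} under the supersonic condition $\mathsf{M}>\sqrt 2$, and combine the three ingredients via frequency-weighted multipliers. From \eqref{equ_f} one reads off
\begin{equation*}
\widehat{f}(\tau,\eta) \;=\; -\,\frac{\mu^+\mu^-}{(\mu^++\mu^-)\,\Sigma(\tau,\eta)}\,M(\tau,\eta),\qquad
\Sigma(\tau,\eta) := \tau^2 + v^2\eta^2\Big(\tfrac{8(\tau/c)^2}{(\mu^++\mu^-)^2} - 1\Big),
\end{equation*}
so that existence and uniqueness reduce to showing $\Sigma$ does not vanish in $\{\Re\tau>0\}$ together with a quantitative lower bound on $|\Sigma|$ there.

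For the source, Cauchy--Schwarz in $y$ applied to \eqref{def_M}, together with the standard bound $\Re\mu^\pm \gtrsim \gamma+|(\delta,\eta)|$ valid in $\{\Re\tau>0\}$, gives
\begin{equation*}
|M(\tau,\eta)|^2 \;\lesssim\; \frac{1}{\Re\mu^+}\!\int_0^{+\infty}\!|\widehat{\F}^+(\tau,\eta,y)|^2\,dy \;+\; \frac{1}{\Re\mu^-}\!\int_0^{+\infty}\!|\widehat{\F}^-(\tau,\eta,-y)|^2\,dy.
\end{equation*}
Multiplication by $\lambda^{2s}:=(\gamma^2+\delta^2+\eta^2)^s$, integration in $(\delta,\eta)$ and Plancherel in $(t,x_1)$ then control the right-hand side by $\gamma^{-1}\big(\|\F^+\|^2_{L^2(\R^+;H^s_\gamma)}+\|\F^-\|^2_{L^2(\R^-;H^s_\gamma)}\big)$; the extra $\gamma^{-1}$ is the origin of the factor $\gamma$ appearing on the left-hand side of \eqref{stimafF1}.

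The heart of the proof is the analysis of the homogeneous degree--$2$ symbol $\Sigma$. Under $\mathsf{M}>\sqrt 2$, the classical weak-stability analysis \cite{CS04MR2095445,MST18} shows that $\Sigma$ is elliptic of order $2$ in $\{\Re\tau>0\}$ and vanishes on the boundary $\{\Re\tau=0\}$ at exactly two simple roots of the form $\tau=\pm iV_0\eta$ for a specific real $V_0=V_0(\mathsf{M})$; these are the neutral frequencies producing the Kreiss--Lopatinski\u\i{} degeneracy. A Taylor expansion about each root, together with the homogeneity of $\Sigma$, should yield a pointwise lower bound of the type
\begin{equation*}
|\Sigma(\tau,\eta)| \;\gtrsim\; \lambda(\tau,\eta)\,\Lambda(\tau,\eta),
\end{equation*}
where $\Lambda$ is a non-negative homogeneous weight comparable to $\gamma$ plus the conic distance to the neutral variety $\mathcal{V}:=\{\tau=\pm iV_0\eta\}$, and comparable to $\lambda$ away from $\mathcal{V}$. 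It is this $\Lambda$, raised to the power $\sigma$, that defines the anisotropic refinement of $H^{s+1}_\gamma$ into $H^{s+1,\sigma}_\gamma$.

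Finally, combining the three steps, I would multiply the pointwise formula for $\widehat{f}$ by $\gamma\,\Lambda^{2\sigma}\lambda^{2(s+1)}$ and integrate in $(\delta,\eta)$; using $|\mu^+\mu^-/(\mu^++\mu^-)|\lesssim\lambda$, the symbol lower bound, and the source estimate then produces \eqref{stimafF1}. Uniqueness follows at once from the explicit formula for $\widehat{f}$ and the non-vanishing of $\Sigma$ in $\{\Re\tau>0\}$. The main obstacle will be the sharp Taylor analysis of $\Sigma$ near $\mathcal{V}$: it must be accurate enough to identify the correct anisotropic weight $\Lambda$ and to show that the additional $\sigma$--regularity encoded by $H^{s+1,\sigma}_\gamma$ is exactly what the single-derivative Kreiss--Lopatinski\u\i{} loss allows in this supersonic regime, with no further loss beyond the hyperbolic weight $\gamma$ on the left-hand side.
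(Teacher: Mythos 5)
Your outline reproduces the three ingredients of the paper's proof --- the bound $\|g\|^2_{H^s_\gamma}\lesssim\gamma^{-1}(\|\F^+\|^2+\|\F^-\|^2)$ for $\widehat g=-\tfrac{\mu^+\mu^-}{\mu^++\mu^-}M$ (which the paper simply imports from \cite{MST18}), the anisotropic lower bound on the symbol, and the multiplier estimate --- and your ``conic distance to the neutral variety'' weight is exactly the paper's $\sigma(\tau,\eta)=(\tau-icY_2\eta)(\tau+icY_2\eta)\Lambda^{-1}$, obtained there (Theorem \ref{teostimaSs}) from the simple-root factorization $\Sigma=(\tau\mp icY_2\eta)H$ of Lemma \ref{zeri_Sigma} together with homogeneity and a partition of unity on the compact hemisphere $\Xi_1$; so the ``main obstacle'' you defer is precisely this factorization plus a routine compactness argument. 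The one genuine methodological difference is the existence step: you divide pointwise by $\Sigma$, which is legitimate since for $\gamma\ge1$ the roots ($\Re\tau=0$) are never attained and $|\Sigma|\ge C|\sigma|\Lambda\ge C\gamma\Lambda>0$, whereas the paper (Theorem \ref{teoexistfg2}) runs a Hahn--Banach/Riesz duality argument through the adjoint symbol $\Sigma^*(\tau,\eta)=\Sigma(\bar\tau,\eta)$; for this constant-coefficient Fourier multiplier your direct route is simpler and equally rigorous, while the duality scheme is the one that survives variable coefficients. Two inaccuracies to fix: (i) the claim $\Re\mu^\pm\gtrsim\gamma+|(\delta,\eta)|$ is false in the hyperbolic region of the wave symbol, where $\mu^\pm$ is nearly purely imaginary; only $\Re\mu^\pm\ge\gamma/c$ holds uniformly, and that (combined with the prefactor $\mu^+\mu^-/(\mu^++\mu^-)$, which you cannot simply drop since $|\mu^++\mu^-|$ can be as small as $O(\gamma)$) is what produces the $\gamma^{-1}$ in \eqref{stimagF}; (ii) in $H^{s+1,\sigma}_\gamma$ the superscript $\sigma$ is not an exponent but the name of the weight function multiplying $\Lambda^{s+1}$ in the norm, consistent with the first-power weight your bound $|\Sigma|\gtrsim|\sigma|\Lambda$ actually delivers.
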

See Section \ref{sec2.w} for the definition of the function spaces.
We refer the reader to Remark \ref{ell_hyp} for a discussion about the different cases $ \mathsf{M}\gtrless\sqrt{2} $ in relation with the classical stability analysis \cite{CS04MR2095445,FM63MR0154509,M58MR0097930,S00MR1775057}. Corollary \ref{corolla2} shows that the existence result of \cite{MST18} is a consequence of the Theorem \ref{teoexist}.

\subsection{The symbol of the pseudo-differential equation \eqref{equ_f}}

 Let us denote the symbol of \eqref{equ_f} by $ \Sigma: $
\begin{equation}\label{def_Sigma}
\Sigma(\tau,\eta):= \tau^2  +  v^2 \eta^2
\left(8\left( \frac{\tau/c}{\mu^+(\tau,\eta)+\mu^-(\tau,\eta)}\right)^2  -1 \right).
\end{equation}
In order to take the homogeneity into account, we define the hemisphere:
\begin{align*}
\Xi_1:=\left\{(\tau,\eta)\in \mathbb{C}\times\mathbb{R}\, :\,
|\tau|^2+\eta^2=1,\Re \tau\geq 0 \right\},
\end{align*}
and the set of ``frequencies'':
\begin{align*}
\Xi:=\left\{(\tau,\eta)\in \mathbb{C}\times\mathbb{R}\, :\,
\Re \tau\geq 0, (\tau,\eta)\ne (0,0) \right\}=(0,\infty)\cdot\Xi_1  \,.
\end{align*}

The symbol $ \Sigma $ is an analytic function as long as $\Re\tau>0$ (so that $
\Re\mu^\pm>0$). Moreover, as shown in \cite{MST18}, in $\Xi_1$ it admits a continuous extension to all points with $\Re\tau=0$, specifically at points $(0,\eta)$ where $\mu^++\mu^-$ vanishes.

The aim of the present paper is to show the existence of the solution in a function space with additional weighted anisotropic regularity in the frequency space. For the definition of such function space we first recall a result from \cite{MST18} about the roots of the symbol $ \Sigma $.

\begin{lemma}[\cite{MST18}]\label{zeri_Sigma}
Let $ \Sigma(\tau,\eta) $ be the symbol defined in \eqref{def_Sigma}, for $ (\tau,\eta)\in\Xi. $
\begin{itemize}
\item[(i)] If $\M<\sqrt{2}$, then $ \Sigma(\tau,\eta)=0 $ if and only if
\[
 \tau=cY_1|\eta|  \qquad \forall\eta\not=0\, ,  \]
where
\[ Y_1= \sqrt{-\left(\M^2+1\right) + \sqrt{4\M^2+1}}\,  .  \]

\item[(ii)] If $\M>\sqrt{2}$, then $ \Sigma(\tau,\eta)=0 $ if and only if
\[ \tau=\pm icY_2\eta \qquad \forall\eta\not=0 \, ,  \]
where
\begin{equation}\label{defY2}
 Y_2= \sqrt{\M^2+1 - \sqrt{4\M^2+1}}\,  .
\end{equation}
Each of these roots is simple. For instance, there exists a neighborhood $\V$ of $( icY_2\eta,\eta)$ in $\Xi_1$ and a $C^\infty$ function $H$ defined on $\V$ such that
\[ \Sigma(\tau,\eta)=(\tau-icY_2\eta)H(\tau,\eta), \quad H(\tau,\eta)\not=0 \quad\forall (\tau,\eta)\in\V.
 \]
 A similar result holds near $(-icY_2\eta,\eta)\in\Xi_1$.
\end{itemize}
\end{lemma}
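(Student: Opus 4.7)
The plan is to transform the transcendental equation $\Sigma(\tau,\eta)=0$ into a polynomial equation in the single variable $X=\tau^2/(c^2\eta^2)$, extract the algebraic candidates, and then identify the genuine roots among those introduced by squaring. By the degree-$2$ homogeneity of $\Sigma$, it suffices to work on $\Xi_1$; at $\eta=0$ one has $\mu^\pm=\tau/c$ and $\Sigma=\tau^2$, with no nontrivial zero, so I restrict to $\eta\neq 0$ throughout.

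First I would set $\xi=\tau/(c\eta)$ and $X=\xi^2$, and compute directly from the definition of $\mu^\pm$:
\[
(\mu^\pm/\eta)^2 = (\xi\pm i\M)^2+1, \qquad (\mu^+/\eta)^2+(\mu^-/\eta)^2 = 2(X-\M^2+1),
\]
\[
(\mu^+\mu^-/\eta^2)^2 = (X-\M^2+1)^2+4X\M^2 = X^2+2(\M^2+1)X+(\M^2-1)^2.
\]
Rewriting $\Sigma=0$ as $(X-\M^2)(\mu^++\mu^-)^2/\eta^2=-8\M^2 X$, isolating the square-root factor $\mu^+\mu^-/\eta^2$, and squaring yields, after routine cancellation, the factored polynomial identity
\[
-4\M^2 X\bigl[X^2 + 2(\M^2+1)X + \M^2(\M^2-2)\bigr] = 0.
\]
Hence the algebraic candidates are $X=0$ and $X=X_{\pm}:= -(\M^2+1)\pm\sqrt{4\M^2+1}$.

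Next I would rule out the two spurious candidates. The value $X=0$ corresponds to the sonic frequency $\tau=0$, at which $\mu^++\mu^-$ itself vanishes; it was injected by multiplying through by $(\mu^++\mu^-)^2$ and is not a root of $\Sigma$. The candidate $X=X_-$ is eliminated by checking signs in the unsquared relation: using the principal branches of $\mu^\pm$ (extended continuously from $\Re\tau>0$), the physical value of $\mu^+\mu^-/\eta^2$ at $\xi^2=X_-$ has the opposite sign from the $+\sqrt{\,\cdot\,}$ used in the squaring step, so the unsquared equation fails. Only $X=X_+$ yields genuine zeros, and the sign of $X_+$ switches at $\M=\sqrt{2}$:
\[
X_+ > 0 \iff \sqrt{4\M^2+1} > \M^2+1 \iff \M^2 < 2.
\]
For $\M<\sqrt{2}$ one has $\xi=\pm Y_1$ (real, with $Y_1=\sqrt{X_+}$), giving $\tau=cY_1|\eta|$ after choosing $\Re\tau\ge 0$; and for $\M>\sqrt{2}$ one has $\xi=\pm iY_2$ (with $Y_2=\sqrt{-X_+}$ as in \eqref{defY2}), giving $\tau=\pm icY_2\eta$.

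Finally, for the simplicity claim in case (ii), I would note that in a neighborhood of $(icY_2\eta,\eta)\in\Xi_1$ the functions $\mu^\pm$ are analytic and $\mu^++\mu^-\neq 0$ (explicit evaluation shows this sum is a nonzero purely imaginary quantity there), so $\Sigma$ is analytic in that neighborhood. The quadratic $X^2+2(\M^2+1)X+\M^2(\M^2-2)$ has distinct roots, its discriminant $4\M^2+1$ being strictly positive, so $X_+$ is a simple root; via the analytic change $X=\xi^2$ this gives simple zeros $\xi=\pm iY_2$, and therefore a local factorization $\Sigma(\tau,\eta)=(\tau-icY_2\eta)H(\tau,\eta)$ on a neighborhood $\V\subset\Xi_1$ with $H$ smooth and nonvanishing. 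The point $(-icY_2\eta,\eta)$ is handled by the same argument. The main technical obstacle I expect is the branch analysis required to rule out $X=X_-$: one must carefully track the principal branches of $\mu^\pm$ and their continuous extensions as $\Re\tau\to 0^+$ in order to identify the correct sign of $\mu^+\mu^-/\eta^2$ at each candidate root, which is what distinguishes a genuine zero of $\Sigma$ from a zero of its squared algebraic relative.
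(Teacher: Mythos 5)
First, note that the paper itself does not prove this lemma: it is quoted verbatim from \cite{MST18}, so there is no internal proof to compare against. Your strategy --- substitute $\xi=\tau/(c\eta)$, $X=\xi^2$, isolate the radical $\mu^+\mu^-/\eta^2$, square, and factor --- is the standard one for this class of Lopatinski\u{\i} determinants and is certainly the route taken in the cited reference. Your algebra is correct: the reduction does produce
\[
-4\M^2 X\bigl[X^2+2(\M^2+1)X+\M^2(\M^2-2)\bigr]=0,
\]
the quadratic has roots $X_\pm=-(\M^2+1)\pm\sqrt{4\M^2+1}$, and the sign of $X_+$ does switch at $\M=\sqrt2$, yielding exactly the stated $Y_1$ and $Y_2$. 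The local factorization argument for simplicity in case (ii) is also sound, granted that $\mu^++\mu^-\neq0$ near $(\pm icY_2\eta,\eta)$, which you assert correctly.

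The genuine gap is that the decisive step --- deciding which algebraic candidates are actual zeros of $\Sigma$ --- is described but not executed, and this is where all the content of the lemma lives. Two points in particular. (a) Your dismissal of $X=0$ as ``injected by multiplying through by $(\mu^++\mu^-)^2$'' is too quick: for $\M>1$ the sum $\mu^++\mu^-$ genuinely vanishes at $\tau=0$, so $\Sigma$ itself is a $0/0$ there and is defined only by the continuous extension mentioned in the paper. One must actually compute that extension (expanding $\mu^\pm$ to first order in $\tau$ gives $8(\tau/c)^2/(\mu^++\mu^-)^2\to 2(\M^2-1)/\M^2$, hence $\Sigma(0,\eta)=v^2\eta^2(\M^2-2)/\M^2\neq0$ for $\M\neq\sqrt2$); the conclusion is right but does not follow from the squaring bookkeeping alone. (b) For $X_\pm$ you must verify, with the branches of $\mu^\pm$ fixed by $\Re\mu^\pm>0$ on $\Re\tau>0$ and extended by continuity to $\Re\tau=0$, that the \emph{unsquared} relation $(X-\M^2)\,\mu^+\mu^-/\eta^2=-4\M^2X-(X-\M^2)(X-\M^2+1)$ holds at $X_+$ and fails at $X_-$. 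This requires computing the signs of $1-(Y\pm\M)^2$ at each candidate and tracking which square root the continuous extension selects; you correctly identify this as the main obstacle but leave it entirely undone, so as written the proof establishes only that the zero set of $\Sigma$ is \emph{contained} in $\{X\in\{0,X_+,X_-\}\}$, not the stated ``if and only if''.
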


\begin{remark}\label{ell_hyp}
(i) Recall that the equation \eqref{equ_f} was obtained by taking the Fourier transform with respect to $(t,x_1)$ of \eqref{puntoN7}, \eqref{wave3}, which corresponds to taking the Laplace transform with respect to $t$ and the Fourier transform with respect to $x_1$ of \eqref{puntoN6}, \eqref{wave2}. Taking the Fourier transform with respect to $t$ of \eqref{puntoN6}, \eqref{wave2} corresponds to the case $\gamma=\Re\tau=0 $, i.e. $ (\tau,\eta)=(i\delta,\eta) $.

If $\M<\sqrt{2}$, from Lemma \ref{zeri_Sigma} the symbol $ \Sigma(\tau,\eta) $ only vanishes in points $ (\tau,\eta)$ with $\tau\in\R, \tau>0$. It follows that $ \Sigma(i\delta,\eta)\not=0 $ for all $(\delta,\eta)\in\R^2$. Therefore the symbol is elliptic, according to the standard definition. In this case planar vortex sheets are violently unstable, see \cite{S00MR1775057}.

(ii) If $\M>\sqrt{2}$, $ \Sigma(\tau,\eta) $ vanishes in points $ (\tau,\eta)$ with $\Re\tau=0$, that is on the boundary of the frequency set $\Xi$. In this case planar vortex sheets are known to be weakly stable, in the sense that the so-called Lopatinski\u{\i} condition holds in a weak sense, see \cite{CS04MR2095445,FM63MR0154509,M58MR0097930,S00MR1775057}. For this case we expect a loss of derivatives for the solution with respect to the data.
\end{remark}

From now we denote by $C,C'$ generic positive constants (independent of $\gamma$) that may vary from line to line, or even in the same line.
Let us define the weight function
\begin{equation}\label{defsigma}
\sigma(\tau,\eta):=\frac{(\tau-icY_2\eta)(\tau+icY_2\eta)}{(|\tau|^2+\eta^2)^{1/2}} \qquad(\tau,\eta)\in\Xi\,,
\end{equation}
where $Y_2$ is defined in \eqref{defY2}. Notice that $\sigma(\tau,\eta)$ is a homogeneous function of degree 1 in $(\tau,\eta)\in\Xi$ and vanishes if and only if $(\tau,\eta)=(\pm icY_2\eta ,\eta),\,\eta\not=0$, that is at points where $\Sigma(\tau,\eta)=0$.
\begin{lemma}\label{propsigma}
Let $\M>\sqrt{2}$. Let $ \sigma(\tau,\eta) $ be defined by \eqref{defsigma}.
\begin{itemize}
\item[(i)] If $(\tau_0,\eta_0)=(+ icY_2\eta_0 ,\eta_0),\,\eta_0\not=0$, there exists a small neighborhood $ \V_+ $ of $(\tau_0,\eta_0)$ in $\Xi_1$ such that
\[ 
|\sigma(\tau,\eta)|\ge C|\tau-icY_2\eta|\geq C\gamma \qquad \forall(\tau,\eta)\in \V_+\cdot\R^+.
\]
\item[(ii)] If $(\tau_0,\eta_0)=(- icY_2\eta_0 ,\eta_0),\,\eta_0\not=0$, there exists a small neighborhood $ \V_- $ of $(\tau_0,\eta_0)$ in $\Xi_1$ such that
\[ 
|\sigma(\tau,\eta)|\ge C|\tau+icY_2\eta|\geq C\gamma \qquad \forall(\tau,\eta)\in \V_-\cdot\R^+.
\]
\item[(iii)] If $(\tau_0,\eta_0)\not=(\pm icY_2\eta_0 ,\eta_0),\,\eta_0\not=0$, there exists a small neighborhood $ \V $ of $(\tau_0,\eta_0)$ in $\Xi_1$ such that
\[ 
C_1\gamma\le C_1\Lambda(\tau,\eta)\le |\sigma(\tau,\eta)|\le  C_2\Lambda(\tau,\eta)\qquad \forall(\tau,\eta)\in \V\cdot\R^+.
\]
\end{itemize}
\end{lemma}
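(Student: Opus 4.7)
The overall plan is to prove each assertion on the compact hemisphere $\Xi_1$, where $(|\tau|^2+\eta^2)^{1/2}=1$, and then propagate the bounds to the full cone $\Xi$ by exploiting the homogeneity of degree $1$ shared by $\sigma$, $\Lambda$, and by $\gamma=\Re\tau$.

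For part (i), at the root $(\tau_0,\eta_0)=(icY_2\eta_0,\eta_0)$ with $\eta_0\ne 0$, the companion factor equals $\tau_0+icY_2\eta_0=2icY_2\eta_0\ne 0$, so by continuity there is a neighborhood $\V_+\subset\Xi_1$ on which $|\tau+icY_2\eta|\ge C$. Since the denominator of $\sigma$ equals $1$ on $\Xi_1$, this immediately gives $|\sigma(\tau,\eta)|\ge C|\tau-icY_2\eta|$ on $\V_+$. To extract the lower bound by $\gamma$, I would write $\tau=\gamma+i\delta$ and use that $cY_2\eta$ is real to obtain $\Re(\tau-icY_2\eta)=\gamma$, whence $|\tau-icY_2\eta|\ge\gamma$. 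Part (ii) is obtained by the mirror argument after swapping the two linear factors.

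For part (iii), at a point $(\tau_0,\eta_0)\in\Xi_1$ with $\tau_0\neq\pm icY_2\eta_0$, both factors in the numerator of $\sigma$ are nonzero, so by continuity both $|\tau\pm icY_2\eta|$ remain between two positive constants on a small neighborhood $\V\subset\Xi_1$; using once more that the denominator equals $1$ on $\Xi_1$, this yields two-sided constant bounds $C_1\le|\sigma(\tau,\eta)|\le C_2$ on $\V$. The lower bound by $C_1\gamma$ is then a consequence of $\gamma\le|\tau|\le 1$ on $\Xi_1$.

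Finally, to transfer every local bound from $\V_\pm$ or $\V$ on $\Xi_1$ to the conical set $\V_\pm\cdot\R^+$ or $\V\cdot\R^+$, I would scale: every $(\tau,\eta)$ in the cone decomposes as $\lambda(\tau',\eta')$ with $\lambda=\Lambda(\tau,\eta)>0$ and $(\tau',\eta')$ in the corresponding neighborhood of $\Xi_1$, and each quantity appearing above is homogeneous of degree $1$, so the inequalities pass without loss. I do not foresee any substantive obstacle; the argument is a continuity-compactness-homogeneity bookkeeping, and the only delicate point is the real-part identity $\Re(\tau\mp icY_2\eta)=\gamma$ that extracts a clean lower bound by $\gamma$ on the complex side.
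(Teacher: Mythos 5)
Your proposal is correct and follows essentially the same route as the paper: bound the nonvanishing factor (or, in case (iii), the whole of $\sigma$) away from zero near the given point, use $|\tau\mp icY_2\eta|\ge\Re(\tau\mp icY_2\eta)=\gamma$, and extend to the cone by degree-one homogeneity of $\sigma$, $\Lambda$, $|\tau\mp icY_2\eta|$ and $\gamma$. The paper phrases the scaling step via the boundedness of the degree-zero ratio $(\tau+icY_2\eta)/(|\tau|^2+\eta^2)^{1/2}$ on $\V_+\cdot\R^+$ rather than normalizing to $\Xi_1$ first, but this is the same argument.
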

\begin{proof}
(i) In a small neighborhood $\V_+$ of $(\tau_0,\eta_0)$ in $\Xi_1$ we may assume that 
$ \frac{\tau+icY_2\eta}{(|\tau|^2+\eta^2)^{1/2}}\not=0.
$ Because it is a homogeneous function of degree 0 there exist constants $C,C'>0$ such that
\begin{equation}\label{ratioomog0}
C \le \left| \frac{\tau+icY_2\eta}{(|\tau|^2+\eta^2)^{1/2}}\right| \leq C'\qquad \forall(\tau,\eta)\in \V_+\cdot\R^+.
\end{equation}
\[  \]
Then 
\[ 
|\sigma(\tau,\eta)|\ge C|\tau-icY_2\eta|\geq C\gamma \qquad \forall(\tau,\eta)\in \V_+\cdot\R^+.
\]
(ii) The proof is similar to the previous case.

\noindent(iii)  In a small neighborhood of $(\tau_0,\eta_0)$ we have $\sigma(\tau,\eta)\not=0$. Then the result follows as before because $\sigma$ is homogeneous of degree 1.
\end{proof}

\subsection{Weighted Sobolev spaces and norms}\label{sec2.w}
We are going to introduce certain weighted Sobolev spaces in order to prove Theorem \ref{teoexist}.
For all $s\in\mathbb{R}$ and for all $\gamma\geq 1$,
the usual Sobolev space $H^s(\mathbb{R}^2)$ is equipped with the following norm:
\begin{align*}
\|v\|_{s,\gamma}^2:=\frac{1}{(2\pi)^2} \iint_{\mathbb{R}^2}\Lambda^{2s}(\tau,\eta) |\widehat{v}(\delta,\eta)|^2\d \delta \d\eta,\qquad
\Lambda^{s}(\tau,\eta):=(\gamma^2+\delta^2+\eta^2)^{\frac{s}{2}}=(|\tau|^2+\eta^2)^{\frac{s}{2}},
\end{align*}
where $\widehat{v}(\delta,\eta)$ is the Fourier transform of $v(t,x_1)$ and $ \tau=\gamma+i\delta $.
We will abbreviate the usual norm of $L^2(\mathbb{R}^2)$ as
\begin{align*}
\|\cdot\|:=\|\cdot\|_{0,\gamma}\, .
\end{align*}
The scalar product in $L^2(\mathbb{R}^2)$ is denoted as follows:
\begin{align*}
\langle a,b\rangle:=\iint_{\mathbb{R}^2} a(x)\overline{b(x)}\d x,
\end{align*}
where $\overline{b(x)}$ is the complex conjugation of $b(x)$.

For $s\in\mathbb{R}$ and $\gamma\geq 1$, we introduce the weighted Sobolev
space $H^{s}_{\gamma}(\mathbb{R}^2)$ as
\begin{align*}
H^{s}_{\gamma}(\mathbb{R}^2)&:=\left\{
u\in\mathcal{D}'(\mathbb{R}^2)\,:\, \mathrm{e}^{-\gamma t}u(t,x_1)\in
H^{s}(\mathbb{R}^2) \right\},
\end{align*}
and its norm $\|u\|_{H^{s}_{\gamma}(\mathbb{R}^2)}:=\|\mathrm{e}^{-\gamma t}u\|_{s,\gamma}=\frac{1}{2\pi}\|\Lambda^{s}\widehat{u}\|$.
We write $L^2_{\gamma}(\mathbb{R}^2):=H^0_{\gamma}(\mathbb{R}^2)$ and $\|u\|_{L^2_{\gamma}(\mathbb{R}^2)}:=\|\mathrm{e}^{-\gamma t}u\|$.

Moreover, we introduce the weighted anisotropic space $H^{s,\sigma}_{\gamma}(\mathbb{R}^2)$ as
\begin{align*}
H^{s,\sigma}_{\gamma}(\mathbb{R}^2)&:=\left\{
u\in\mathcal{D}'(\mathbb{R}^2)\,:\, \sigma\Lambda^s\widehat{u}\in
L^{2}(\mathbb{R}^2) \right\},
\end{align*}
and its norm 
\[
\|u\|^2_{H^{s,\sigma}_{\gamma}(\mathbb{R}^2)}:=\frac{1}{(2\pi)^2}\|\sigma\Lambda^s\widehat{u}\|^2 
=\frac{1}{(2\pi)^2} \iint_{\mathbb{R}^2}|\sigma(\tau,\eta)\Lambda^{s}(\tau,\eta) \widehat{u}(\delta,\eta)|^2\d \delta \d\eta \, .
\]
Using Lemma \ref{propsigma} it is easily seen that $C\gamma\le|\sigma(\tau,\eta)|\le C\Lambda(\tau,\eta)$ for all $(\tau,\eta)\in\Xi$, for a suitable constant $C>0$. It follows that $H^{s+1}_{\gamma}(\mathbb{R}^2)\subset H^{s,\sigma}_{\gamma}(\mathbb{R}^2)\subset H^{s}_{\gamma}(\mathbb{R}^2)$ and
\begin{equation}\label{stimaspazi}
\gamma\|u\|_{H^{s}_{\gamma}(\mathbb{R}^2)}\le C\|u\|_{H^{s,\sigma}_{\gamma}(\mathbb{R}^2)}\le C\|u\|_{H^{s+1}_{\gamma}(\mathbb{R}^2)}. 
\end{equation} 
Lemma \ref{propsigma} also shows that the space $H^{s,\sigma}_{\gamma}(\mathbb{R}^2)$ is anisotropic in the frequency space in that along the directions $(\tau,\eta)=(\pm icY_2\eta,\eta),\eta\not=0,$ functions from $H^{s,\sigma}_{\gamma}(\mathbb{R}^2)$ have a $H^{s}_{\gamma}$ regularity, while off such directions they are $H^{s+1}_{\gamma}$.

We define $L^2(\mathbb{R}^\pm;H^{s}_{\gamma}(\mathbb{R}^2))$
as the spaces of distributions with finite norm
\begin{align*}
\|u\|_{L^2(\mathbb{R}^\pm;H^s_{\gamma}(\mathbb{R}^2))}^2:=\int_{\mathbb{R}^+}\|u(\cdot,\pm x_2)\|_{H^s_{\gamma}(\mathbb{R}^2)}^2\d x_2
\, .
\end{align*}

\section{Proof of Theorem \ref{teoexist}}

\begin{theorem}\label{teostimaSs}
	Let $\M>\sqrt{2}$. Let $ \Sigma(\tau,\eta) $ be the symbol defined in \eqref{def_Sigma} and $ \sigma(\tau,\eta) $ be defined by \eqref{defsigma}. 
	There exist constants $C_1,C_2,C_3>0$ such that
	\begin{equation}\label{stimaSs}
	|\sigma| \Lambda \le C_1|\Sigma|\le C_2|\sigma| \Lambda\le C_3\Lambda^2 \qquad \forall (\tau,\eta)\in\Xi.
	\end{equation}
\end{theorem}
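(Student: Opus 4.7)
The plan is to exploit the fact that both $\Sigma$ and the product $\sigma\,\Lambda$ are continuous, homogeneous functions of degree $2$ on the frequency set $\Xi$, and that (by Lemma \ref{zeri_Sigma}(ii) together with the definition \eqref{defsigma}) they vanish on the same two half-lines $\tau=\pm icY_2\eta$, $\eta\neq 0$, with the same simple order along each of them. Once this is established, the desired two-sided bound reduces to a compactness argument on the hemisphere $\Xi_1$.

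First I would dispatch the rightmost inequality $C_{2}|\sigma|\Lambda\le C_{3}\Lambda^{2}$, which is independent of $\Sigma$: from \eqref{defsigma},
\[
|\sigma(\tau,\eta)|=\frac{|\tau^{2}+c^{2}Y_{2}^{2}\eta^{2}|}{\Lambda(\tau,\eta)}\le\frac{|\tau|^{2}+c^{2}Y_{2}^{2}\eta^{2}}{\Lambda(\tau,\eta)}\le\max(1,c^{2}Y_{2}^{2})\,\Lambda(\tau,\eta),
\]
so $|\sigma|\Lambda\le C\Lambda^{2}$ on all of $\Xi$.

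For the two middle inequalities I would form the quotient
\[
G(\tau,\eta):=\frac{\Sigma(\tau,\eta)}{\sigma(\tau,\eta)\,\Lambda(\tau,\eta)},
\]
which is homogeneous of degree $0$ on $\Xi$ wherever the denominator does not vanish, and show that $|G|$ is bounded above and below on $\Xi_{1}$. Since $\Xi_{1}$ is compact and $\Sigma,\sigma\Lambda$ are continuous (the continuous extension of $\Sigma$ up to $\Re\tau=0$ is the one recalled just before Lemma \ref{zeri_Sigma}), it suffices to handle two kinds of points. Away from the four zeros $(\pm icY_{2}\eta_{0},\eta_{0})\in\Xi_{1}$, both numerator and denominator are continuous and nonzero, so $|G|$ is locally bounded above and below by compactness. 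Near a zero, say $(\tau_{0},\eta_{0})=(icY_{2}\eta_{0},\eta_{0})$, Lemma \ref{zeri_Sigma}(ii) gives a neighborhood $\V\subset\Xi_{1}$ and a nonvanishing $C^{\infty}$ factor $H$ with $\Sigma(\tau,\eta)=(\tau-icY_{2}\eta)H(\tau,\eta)$. On the same neighborhood $\tau+icY_{2}\eta$ stays away from zero, hence
\[
G(\tau,\eta)=\frac{(\tau-icY_{2}\eta)H(\tau,\eta)}{(\tau-icY_{2}\eta)(\tau+icY_{2}\eta)/\Lambda(\tau,\eta)\cdot\Lambda(\tau,\eta)\,\Lambda(\tau,\eta)/\Lambda(\tau,\eta)}=\frac{H(\tau,\eta)}{\tau+icY_{2}\eta},
\]
which is continuous and nonzero on $\V$; the analogous argument applies near $(-icY_{2}\eta_{0},\eta_{0})$. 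A finite cover of $\Xi_{1}$ by such neighborhoods, together with the compactness of $\Xi_{1}$, then yields constants $0<c_{*}\le |G|\le C_{*}$ uniformly on $\Xi_{1}$. Homogeneity of degree $0$ extends the same two-sided bound on $G$ to all of $\Xi\setminus\{\Sigma=0\}$, which is equivalent to the middle inequalities in \eqref{stimaSs}.

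The main obstacle I anticipate is the bookkeeping near the singular directions $\tau=\pm icY_{2}\eta$: one must verify that the simple-root factorization supplied by Lemma \ref{zeri_Sigma}(ii) matches exactly the product structure in $\sigma\Lambda=(\tau-icY_{2}\eta)(\tau+icY_{2}\eta)$ so that the common linear factor cancels cleanly, leaving a nonvanishing, bounded ratio. Everything else is continuity plus compactness of $\Xi_{1}$ and the degree-$2$ homogeneity of the two symbols.
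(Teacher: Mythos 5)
Your proof is correct and follows essentially the same route as the paper: a case analysis on the compact hemisphere $\Xi_1$ separating elliptic points from the simple roots $(\pm icY_2\eta,\eta)$, the factorization $\Sigma=(\tau-icY_2\eta)H$ from Lemma \ref{zeri_Sigma}(ii) to cancel the vanishing linear factor against the one in $\sigma\Lambda$, and degree-$2$ homogeneity to pass from $\Xi_1$ to $\Xi$. The only cosmetic difference is that you bound the degree-$0$ quotient $\Sigma/(\sigma\Lambda)$ over a finite cover, where the paper patches the same local estimates together with a partition of unity $\sum_i\chi_i^2=1$.
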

\begin{proof}
	The study of $\Sigma$ in the proof of Lemma \ref{zeri_Sigma} implies that for all $(\tau_0,\eta_0)\in\Xi_1$, there exists a neighborhood $\V$ of $(\tau_0,\eta_0)$ with suitable properties, as explained in the following. Because $\Xi_1$ is a $C^\infty$ compact manifold, there exists a finite covering $(\V_1,\dots,\V_I)$ of $\Xi_1$ by such neighborhoods, and a smooth partition of unity $(\chi_1,\dots,\chi_I)$ associated with this covering.
	The $\chi_i's$ are nonnegative $C^\infty$ functions with
	\[
	\supp\chi_i\subset\V_i, \qquad \sum_{i=1}^I\chi_i^2=1.
	\]
	We consider two different cases.
	
	{\it In the first case}
	$\V_i$ is a neighborhood of an {\it elliptic} point, that is a point $ (\tau_0,\eta_0) $ where $ \Sigma(\tau_0,\eta_0)\not=0. $ By taking $\V_i$ sufficiently small we may assume that $ \Sigma(\tau,\eta)\not=0 $ in the whole neighborhood $\V_i$, and there exist two positive constants $C,C'$ such that $$ C\le|\Sigma(\tau,\eta)|\le C' \qquad \forall (\tau,\eta)\in\V_i\,.
	$$
	$ \Sigma(\tau,\eta)$ is a homogeneous function of degree 2 on $\Xi$, so we have
	\begin{equation}\label{stima_ellip0}
	C\Lambda^2(\tau,\eta)\le|\Sigma(\tau,\eta)|\le C'\Lambda^2(\tau,\eta) \qquad \forall (\tau,\eta)\in\V_i\cdot\R^+\,.
	\end{equation}
	From Lemma \ref{propsigma} (iii) we may also assume that $ \sigma(\tau,\eta)\not=0 $ on $\V_i$. Since $ \sigma(\tau,\eta)$ is a continuous homogeneous function of degree 1 on $\Xi$ we get in a similar way
	\begin{equation}\label{stima_ellip00}
	C\Lambda(\tau,\eta)\le|\sigma(\tau,\eta)|\le C'\Lambda(\tau,\eta) \qquad \forall (\tau,\eta)\in\V_i\cdot\R^+\,.
	\end{equation}
	Let us extend the associated function $\chi_i$ to the whole set of frequencies $\Xi$, as a homogeneous mapping of degree 0 with respect to $(\tau,\eta)$.
	From \eqref{stima_ellip0}, \eqref{stima_ellip00}
	we deduce that
	\begin{equation}\label{stima_ellip}
	\chi_i|\sigma|\Lambda\le
	C\chi_i|\Sigma|\le C\chi_i |\sigma|\Lambda\le C\chi_i\Lambda^2 \qquad \forall (\tau,\eta)\in\V_i\cdot\R^+\,.
	\end{equation}
	
	{\it In the second case}\label{first}
	$\V_i$ is a neighborhood of a {\it root} of the symbol $ \Sigma $, i.e. a point $ (\tau_0,\eta_0) $ where $ \Sigma(\tau_0,\eta_0)=0. $ For instance we may assume that $ (\tau_0,\eta_0)=(icY_2\eta_0,\eta_0), \eta_0\not=0$, see Lemma \ref{zeri_Sigma}; a similar argument applies for the other family of roots $(\tau,\eta)=(-icY_2\eta,\eta)$.
	According to Lemma \ref{zeri_Sigma} we may assume that on $\V_i$ it holds
	\[ \Sigma(\tau,\eta)=(\tau-icY_2\eta)H(\tau,\eta), \quad H(\tau,\eta)\not=0 \quad\forall (\tau,\eta)\in\V_i.
	\]
	We extend the associated function $\chi_i$ to the whole set of frequencies $\Xi$, as a homogeneous mapping of degree 0 with respect to $(\tau,\eta)$.
	Because $ H(\tau,\eta)\not=0 $ on $\V_i$, there exists positive constants $C,C'$ such that $$ C\le |H(\tau,\eta)|\le C' \qquad \forall (\tau,\eta)\in\V_i\,.
	$$
	$H(\tau,\eta)$ is a homogeneous function of degree 1 on $\Xi$, so we have
	\[
C\Lambda(\tau,\eta)\le	|H(\tau,\eta)|\le C'\Lambda(\tau,\eta) \qquad \forall (\tau,\eta)\in\V_i\cdot\R^+\,.
	\]	
	We get
	\begin{equation}\label{stima_nellip001}
	|\Sigma(\tau,\eta)|=|(\tau-icY_2\eta)H(\tau,\eta)|\ge C|\tau-icY_2\eta|\Lambda(\tau,\eta) \qquad \forall (\tau,\eta)\in\V_i\cdot\R^+\,.
	\end{equation}
	From Lemma \ref{propsigma} (i), see also \eqref{ratioomog0}, and \eqref{stima_nellip001} we obtain
	\begin{equation}\label{stima_nellip0}
	|\Sigma(\tau,\eta)|\ge  C|\tau-icY_2\eta|
	\left| \frac{\tau+icY_2\eta}{(|\tau|^2+\eta^2)^{1/2}}\right|
	\Lambda(\tau,\eta)=C|\sigma(\tau,\eta)|\Lambda(\tau,\eta)
	\qquad \forall (\tau,\eta)\in\V_i\cdot\R^+\,.
	\end{equation}
	In a similar way we get on the other side
	\begin{equation}\label{stima_nellip00}
	|\Sigma(\tau,\eta)|\le  C|\tau-icY_2\eta|
	\Lambda(\tau,\eta)\le C|\sigma(\tau,\eta)|\Lambda(\tau,\eta)
	\qquad \forall (\tau,\eta)\in\V_i\cdot\R^+\,.
	\end{equation}
	Also in this case we deduce that
	\begin{equation}\label{stima_nellip}
	\chi_i|\sigma|\Lambda\le
C\chi_i|\Sigma|\le C\chi_i |\sigma|\Lambda\le C\chi_i\Lambda^2 \qquad \forall (\tau,\eta)\in\V_i\cdot\R^+\,.
	\end{equation}
	
	In conclusion, adding up the square of \eqref{stima_ellip} and \eqref{stima_nellip}, and using that the $ \chi_i $'s form a partition of unity gives \eqref{stimaSs}. 
\end{proof}

Let us denote by $S$ the operator with symbol $\Sigma$, that is defined by
\begin{equation}\label{def_S}
\widehat {Sf}(\tau,\eta)=\Sigma(\tau,\eta)\widehat f(\tau,\eta)  \qquad (\tau,\eta)\in\Xi\, .
\end{equation} 

Theorem \ref{teostimaSs} gives the following alternative characterization of the space $H^{s+1,\sigma}_\gamma(\R^2)$.
\begin{lemma}
	Let $\M>\sqrt{2}$. Let $ S $ be the operator defined in \eqref{def_S}, $ \sigma(\tau,\eta) $ be defined by \eqref{defsigma} and $s\in\R, \gamma\ge1$.
	Then
	\[
	H^{s+1,\sigma}_\gamma(\R^2)= \left\lbrace  f\in H^{s+1}_\gamma(\R^2)\, :\, S f \in H^{s}_\gamma(\R^2)\right\rbrace 
	.
	\]
\end{lemma}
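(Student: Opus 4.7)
The plan is to reduce everything to pointwise comparisons on the Fourier side and apply Plancherel. Since the norm on $H^{s+1,\sigma}_\gamma(\R^2)$ is defined by $\|\sigma\Lambda^{s+1}\widehat u\|_{L^2}$, and the norm on $H^s_\gamma(\R^2)$ of $Sf$ is $\|\Lambda^s \Sigma \widehat f\|_{L^2}$, the two-sided estimate
\[
|\sigma(\tau,\eta)|\Lambda(\tau,\eta) \le C_1|\Sigma(\tau,\eta)| \le C_2|\sigma(\tau,\eta)|\Lambda(\tau,\eta) \qquad \forall(\tau,\eta)\in\Xi
\]
from Theorem \ref{teostimaSs} is precisely what makes the two weighted $L^2$ norms $\|\sigma\Lambda^{s+1}\widehat f\|_{L^2}$ and $\|\Sigma\Lambda^{s}\widehat f\|_{L^2}$ comparable. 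The additional inequality $|\sigma|\le C\Lambda$, already noted after \eqref{stimaspazi}, will be used to conclude the ambient regularity $f\in H^{s+1}_\gamma(\R^2)$.

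For the forward inclusion, I would start from $f\in H^{s+1,\sigma}_\gamma(\R^2)$, so that $\sigma\Lambda^{s+1}\widehat f\in L^2(\R^2)$. The lower bound $|\sigma|\ge C\gamma$ from Lemma \ref{propsigma} (valid uniformly on $\Xi$) gives
\[
\gamma\,\Lambda^{s+1}|\widehat f| \le C|\sigma|\Lambda^{s+1}|\widehat f|,
\]
so $f\in H^{s+1}_\gamma(\R^2)$ (with $\gamma\|f\|_{H^{s+1}_\gamma}\le C\|f\|_{H^{s+1,\sigma}_\gamma}$, cf.\ \eqref{stimaspazi}). Next, the upper bound $|\Sigma|\le C_2|\sigma|\Lambda$ from Theorem \ref{teostimaSs} yields
\[
\Lambda^s|\widehat{Sf}|=\Lambda^s|\Sigma||\widehat f|\le C_2|\sigma|\Lambda^{s+1}|\widehat f|,
\]
so that $Sf\in H^s_\gamma(\R^2)$. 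This shows the left-hand side is contained in the right-hand side.

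For the reverse inclusion, I would take $f\in H^{s+1}_\gamma(\R^2)$ with $Sf\in H^s_\gamma(\R^2)$, and use the lower bound $|\sigma|\Lambda\le C_1|\Sigma|$ from Theorem \ref{teostimaSs} to estimate
\[
|\sigma|\Lambda^{s+1}|\widehat f|\le C_1|\Sigma|\Lambda^s|\widehat f|=C_1\Lambda^s|\widehat{Sf}|,
\]
whose right-hand side is in $L^2(\R^2)$ by assumption. Hence $\sigma\Lambda^{s+1}\widehat f\in L^2(\R^2)$, i.e.\ $f\in H^{s+1,\sigma}_\gamma(\R^2)$.

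There is no substantive obstacle in this argument: both directions are pure Plancherel plus the symbol comparison supplied by Theorem \ref{teostimaSs} and Lemma \ref{propsigma}. The only point worth a cautionary word is that the forward inclusion needs the ambient $H^{s+1}_\gamma$-regularity separately, which is why the lower bound $|\sigma|\ge C\gamma$ on all of $\Xi$ (obtained from the three cases in Lemma \ref{propsigma} together with compactness of $\Xi_1$ and homogeneity) is invoked; without it, one could only conclude $f\in H^{s,\sigma}_\gamma$ via \eqref{stimaspazi}, not the desired $f\in H^{s+1}_\gamma$.
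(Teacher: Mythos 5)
Your proof is correct and follows essentially the same route as the paper: the paper also multiplies the two-sided symbol estimate of Theorem \ref{teostimaSs} by $\Lambda^{s}$ and combines it with \eqref{stimaspazi} (equivalently, the uniform bound $|\sigma|\ge C\gamma$) to make the norms $\|\sigma\Lambda^{s+1}\widehat f\|$ and $\|\Lambda^{s}\Sigma\widehat f\|$ comparable and to recover the ambient $H^{s+1}_\gamma$-regularity. Your write-up merely separates the two inclusions explicitly, which matches the paper's chain of inequalities.
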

\begin{proof}
Multiplying estimate \eqref{stimaSs} by $ \Lambda^s $ gives 
\begin{equation}\label{stimaSs2}
|\sigma| \Lambda^{s+1} \le C_1|\Lambda^s\Sigma|\le C_2|\sigma| \Lambda^{s+1} \qquad \forall (\tau,\eta)\in\Xi.
\end{equation}
Then from \eqref{stimaspazi}, \eqref{stimaSs2} we get
\[
\gamma\|f\|_{H^{s+1}_{\gamma}(\mathbb{R}^2)}
\le C\|f\|_{H^{s+1,\sigma}_{\gamma}(\mathbb{R}^2)}=C\|\sigma\Lambda^{s+1}\widehat{f}\|\le C\|\Lambda^{s}\Sigma\widehat{f} \|=C\|Sf\|_{H^{s}_{\gamma}(\mathbb{R}^2)}\, ,
\]
and
\[
 \|Sf\|_{H^{s}_{\gamma}(\mathbb{R}^2)}=\frac{1}{2\pi}\|\Lambda^{s}\Sigma\widehat{f}\ \|\le  C\|\sigma\Lambda^{s+1}\widehat{f}\|=C\|f\|_{H^{s+1,\sigma}_{\gamma}(\mathbb{R}^2)}
\, ,
\]
which yield the thesis.
\end{proof}

Consider the equation
	\begin{equation}\label{def_g}
\Sigma(\tau,\eta)\widehat f(\tau,\eta)=\widehat g(\tau,\eta) \qquad (\tau,\eta)\in\Xi\, ,
\end{equation}
where $ \widehat{g} $ is the Fourier transform of $\widetilde{g}:= e^{-\gamma t}g. $ 
\begin{corollary}\label{teofg}
	Assume $\M>\sqrt{2}$. Given any $f\in H^{s+2}_\gamma(\R^2)$, let $g\in H^s_\gamma(\R^2)$ be the function defined by \eqref{def_g}.
	Then there exists a positive constant $C$ such that for all $\gamma\ge1$ the following estimate holds
	\begin{equation}\label{stimafg}
 \|f\|_{H^{s+1,\sigma}_\gamma(\R^2)} \le C \|g\|_{H^s_\gamma(\R^2)} \, .
	\end{equation}
\end{corollary}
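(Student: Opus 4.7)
The plan is to obtain \eqref{stimafg} as an essentially immediate pointwise-multiplier consequence of the symbol estimate \eqref{stimaSs} of Theorem \ref{teostimaSs}, and in fact simply to read off the relevant half of the norm-equivalence established in the proof of the preceding lemma.

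First I would multiply the lower bound $|\sigma|\Lambda\le C_1|\Sigma|$ from \eqref{stimaSs} by $\Lambda^s$ to get the pointwise inequality
$$|\sigma(\tau,\eta)|\,\Lambda^{s+1}(\tau,\eta) \le C_1\,\Lambda^{s}(\tau,\eta)\,|\Sigma(\tau,\eta)|,\qquad (\tau,\eta)\in\Xi.$$
Then, using the defining relation $\Sigma(\tau,\eta)\widehat f(\tau,\eta)=\widehat g(\tau,\eta)$ in \eqref{def_g}, multiplication by $|\widehat f(\tau,\eta)|$ yields almost everywhere
$$|\sigma(\tau,\eta)|\,\Lambda^{s+1}(\tau,\eta)\,|\widehat f(\tau,\eta)| \le C_1\,\Lambda^{s}(\tau,\eta)\,|\widehat g(\tau,\eta)|.$$

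Squaring both sides, integrating over $(\delta,\eta)\in\R^2$, and recognizing the two $L^2$ integrals as the defining norms from Section \ref{sec2.w} would then give
$$\|f\|^2_{H^{s+1,\sigma}_\gamma(\R^2)} \le C_1^2\,\|g\|^2_{H^s_\gamma(\R^2)},$$
which is \eqref{stimafg} after taking square roots. The $\gamma$-independence of $C$ is automatic because \eqref{stimaSs} is itself $\gamma$-independent. The hypothesis $f\in H^{s+2}_\gamma(\R^2)$ plays only a qualitative role: combined with the upper bound $|\sigma|\Lambda\le C_3\Lambda^2/C_1$ from \eqref{stimaSs} (equivalently, with \eqref{stimaspazi}), it ensures that $f\in H^{s+1,\sigma}_\gamma(\R^2)$ and hence that the left-hand side is a priori finite, so no separate approximation argument is needed.

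No step of this argument presents a genuine obstacle, because all the nontrivial work has already been performed in Theorem \ref{teostimaSs}, whose proof handles separately the elliptic region (where $|\Sigma|\simeq \Lambda^2\simeq |\sigma|\Lambda$) and neighborhoods of the characteristic lines $\tau=\pm icY_2\eta$ (where $\Sigma$ factors as $(\tau\mp icY_2\eta)H$ with $H$ nonvanishing, and where Lemma \ref{propsigma} pins down the vanishing rate of $\sigma$ so that again $|\Sigma|\simeq|\sigma|\Lambda$). Thus the corollary is simply the norm-level reformulation of that pointwise symbol equivalence specialized to the equation $\Sigma\widehat f = \widehat g$.
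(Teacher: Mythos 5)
Your proposal is correct and coincides with the paper's own proof: both multiply the lower bound $|\sigma|\Lambda\le C_1|\Sigma|$ in \eqref{stimaSs} by $\Lambda^s$, substitute $\Sigma\widehat f=\widehat g$ from \eqref{def_g}, square, and integrate via Plancherel. Your additional remark on the qualitative role of the hypothesis $f\in H^{s+2}_\gamma(\R^2)$ is a harmless (and accurate) elaboration.
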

\begin{proof}
Multiplying \eqref{stimaSs} by $ \Lambda^{s}(\tau,\eta) $ and using \eqref{def_g} gives
\begin{equation}\label{stimapointwise}
C|\sigma(\tau,\eta)\Lambda^{s+1}(\tau,\eta)\widehat f(\tau,\eta)|^2\le
|\Lambda^{s}(\tau,\eta)\widehat g(\tau,\eta)|^2  \qquad \forall (\tau,\eta)\in\Xi\,.
\end{equation}
Then, integrating with respect to $ (\delta,\eta)\in\R^2 $ and using Plancherel's theorem yields the estimate
\eqref{stimafg} for a suitable constant $C$.
\end{proof}

In the following theorem we prove the existence of the solution $f$ to equation \eqref{def_g}.

\begin{theorem}\label{teoexistfg2}
	Assume $\M>\sqrt{2}$. Let $\Sigma$ be the symbol defined by \eqref{def_Sigma} and $s\in\R, \gamma\ge1$. Given any $g\in H^s_\gamma(\R^2)$ there exists a unique solution $f\in H^{s+1,\sigma}_\gamma(\R^2)$ of equation \eqref{def_g}, satisfying the estimate
\eqref{stimafg}.
\end{theorem}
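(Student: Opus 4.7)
My plan is to construct the solution $f$ directly in frequency space by inverting $\Sigma$, then read off the estimate from the two-sided bound already established in Theorem \ref{teostimaSs}. Concretely, for a given $g\in H^s_\gamma(\R^2)$, I would define $\widehat{f}$ pointwise by
\[
\widehat{f}(\tau,\eta):=\frac{\widehat{g}(\tau,\eta)}{\Sigma(\tau,\eta)}\qquad (\tau,\eta)\in\Xi,\ \Re\tau=\gamma\ge1,
\]
and then recover $f$ via inverse Fourier transform (and multiplication by $e^{\gamma t}$).

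The first step is to justify that this formula makes sense. Since $\M>\sqrt{2}$, Lemma \ref{zeri_Sigma}(ii) says that the roots of $\Sigma$ in $\Xi$ lie exclusively on $\Re\tau=0$, hence $\Sigma(\tau,\eta)\neq 0$ on the whole slice $\{\Re\tau=\gamma\}$ for any $\gamma\ge1$. Combining the lower bound $|\sigma(\tau,\eta)|\ge C\gamma$ from Lemma \ref{propsigma} with the estimate $|\sigma|\Lambda\le C_1|\Sigma|$ of Theorem \ref{teostimaSs} gives the uniform lower bound $|\Sigma(\tau,\eta)|\ge C\gamma\Lambda(\tau,\eta)\ge C\gamma>0$ on that slice. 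Thus $1/\Sigma$ is a bounded continuous function there, and $\widehat{f}$ is a well-defined tempered distribution whenever $\widehat{g}$ is.

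For the norm estimate, I would simply repeat the computation of Corollary \ref{teofg}. Multiplying the equivalence \eqref{stimaSs} of Theorem \ref{teostimaSs} by $\Lambda^{s}/|\Sigma|^2$ and using the defining identity $\Sigma\widehat{f}=\widehat{g}$ gives the pointwise bound
\[
|\sigma(\tau,\eta)\Lambda^{s+1}(\tau,\eta)\widehat{f}(\tau,\eta)|^{2}\le C\,|\Lambda^{s}(\tau,\eta)\widehat{g}(\tau,\eta)|^{2},\qquad (\tau,\eta)\in\Xi.
\]
Integrating over $(\delta,\eta)\in\R^2$ and applying the definitions of the weighted norms in Section \ref{sec2.w} yields precisely $\|f\|_{H^{s+1,\sigma}_{\gamma}(\R^2)}\le C\|g\|_{H^s_{\gamma}(\R^2)}$, which is \eqref{stimafg} and, in particular, proves $f\in H^{s+1,\sigma}_{\gamma}(\R^2)$.

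Finally, uniqueness is immediate: if $f_1,f_2\in H^{s+1,\sigma}_{\gamma}(\R^2)$ both satisfy \eqref{def_g}, then the Fourier transform of $e^{-\gamma t}(f_1-f_2)$ satisfies $\Sigma\widehat{(f_1-f_2)}=0$ a.e., and non-vanishing of $\Sigma$ on the slice $\Re\tau=\gamma$ forces $f_1=f_2$. There is really no serious obstacle remaining at this stage: all the analytic content has been concentrated in the symbol estimate \eqref{stimaSs} proved in Theorem \ref{teostimaSs} (itself reducing to the local behavior analysis of Lemmas \ref{zeri_Sigma} and \ref{propsigma}), and the present theorem is its clean functional-analytic consequence.
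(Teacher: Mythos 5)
Your construction is correct, but it follows a genuinely different route from the paper. The paper proves existence by a duality argument: it introduces the adjoint symbol $\Sigma^\ast(\tau,\eta)=\Sigma(\bar\tau,\eta)$, derives the a priori bound $\|\sigma\Lambda\widehat h\|\le C\|\Sigma^\ast\widehat h\|$ for the adjoint, defines a bounded linear form on the range space $\Rc=\{\Sigma^\ast\sigma^{-1}\Lambda^{-s-1}\widehat h\}$, and invokes Hahn--Banach and Riesz to produce $w=\sigma\Lambda^{s+1}\widehat f$ in $L^2$, so that $f$ solves \eqref{def_g} in a weak (tested) sense. You instead invert the symbol directly, setting $\widehat f=\widehat g/\Sigma$; this is legitimate here because, as you correctly note, for $\M>\sqrt2$ the zeros of $\Sigma$ lie on $\{\Re\tau=0\}$ (Lemma \ref{zeri_Sigma}(ii)), while the relevant slice is $\Re\tau=\gamma\ge1$, and the chain $|\Sigma|\ge C^{-1}|\sigma|\Lambda\ge C'\gamma\Lambda>0$ (Theorem \ref{teostimaSs} combined with the uniform bound $|\sigma|\ge C\gamma$ from Lemma \ref{propsigma}) makes $1/\Sigma$ bounded there. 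The estimate then follows from the same pointwise inequality $|\sigma\Lambda^{s+1}\widehat f|\le C|\Lambda^s\widehat g|$ already used in Corollary \ref{teofg}, with a constant independent of $\gamma$ since \eqref{stimaSs} holds uniformly on $\Xi$; uniqueness is immediate from the non-vanishing of $\Sigma$. Your argument is shorter and yields the equation pointwise a.e.\ rather than weakly; what the paper's functional-analytic scheme buys is robustness --- the passage from an adjoint estimate to existence via Hahn--Banach does not require an explicit inverse of the operator and therefore survives perturbation to variable-coefficient or genuinely pseudo-differential settings, which is the direction the author signals for future nonlinear work. For the constant-coefficient Fourier-multiplier problem actually at hand, both proofs are complete and equivalent in content.
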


\begin{proof}
We use a duality argument. Let us denote by $ \Sigma^* $ the symbol of the adjoint operator $ S^* $, such that
\begin{align*}
\langle \Sigma\widehat{f},\widehat{h}\rangle=\langle \widehat{f},\Sigma^*\widehat{h}\rangle
\end{align*}
for $ f,h $ sufficiently smooth. From the definition \eqref{def_Sigma} we easily deduce that
\begin{equation}\label{equSS*}
\Sigma^\ast(\tau,\eta)=\Sigma(\bar{\tau},\eta)\,.
\end{equation}
Thus, from Theorem \ref{teofg}, see in particular \eqref{stima_ellip0}, \eqref{stima_nellip0}, \eqref{stimapointwise} and notice that $\sigma(\overline{\tau}, \eta)=\overline{\sigma}({\tau}, \eta)$, $\Lambda(\overline{\tau}, \eta)=\Lambda({\tau}, \eta)$, we obtain the estimate
 \begin{equation*}
|\sigma(\tau,\eta)\Lambda(\tau,\eta)\widehat h(\tau,\eta)|^2\le
C|\Sigma^\ast(\tau,\eta)\widehat h(\tau,\eta)|^2  \,,
 \end{equation*}
 which gives by integration in $ (\delta,\eta) $
\begin{equation}\label{stimaSigma*}
\|\sigma\Lambda\widehat h \|\le
C\|\Sigma^\ast\widehat h\|  \,.
\end{equation}
We compute
\begin{align}\label{duality}
\left|\langle \widehat{g},\widehat{h}\rangle\right|=\left| \langle \Lambda^s\widehat{g},\Lambda^{-s}\widehat{h}\rangle\right|
\le\|\Lambda^s\widehat{g}\| \, \|\Lambda^{-s}\widehat{h}\|\,.
\end{align}
From \eqref{stimaSs}, \eqref{equSS*}, \eqref{stimaSigma*} (with $\Lambda^{-s-1}\widehat{h}$ instead of $\widehat{h}$) we obtain (notice that Lemma \ref{propsigma} and $\gamma\ge1$ give $|\sigma|>0$ )
\begin{equation}\label{stimaL-s}
\|\Lambda^{-s}\widehat{h}\|=\|\sigma\Lambda\sigma^{-1}\Lambda^{-s-1}\widehat{h}\|\le C\|\Sigma^\ast\sigma^{-1}\Lambda^{-s-1}\widehat h\| \le {C}\|\sigma^{-1}\Lambda^{-s+1}\widehat h\| \, .
\end{equation}
Let us denote
\[
\Rc:=\left\{ \Sigma^\ast\sigma^{-1}\Lambda^{-s-1}\widehat h \,\, | \,\, \sigma^{-1}\Lambda^{-s+1}\widehat h\in L^{2}(\R^2) \right\} \,.
 \]
 From \eqref{stimaL-s} it is clear that $ \Rc$ is a subspace of $ L^2(\R^2) $; moreover, the map $ \Sigma^\ast\sigma^{-1}\Lambda^{-s-1}\widehat h \mapsto \Lambda^{-s}\widehat{h} $ is well-defined and continuous from $ \Rc$ into $ L^2(\R^2) $. Given $ g\in H^s_\gamma(\R^2) $, we define a linear form $ \ell $ on $ \Rc $ by
 \[
 \ell(\Sigma^\ast\sigma^{-1}\Lambda^{-s-1}\widehat h)= \langle \widehat{g},\widehat{h} \rangle \,.
  \]
From \eqref{duality}, \eqref{stimaL-s} we obtain
\[
\left| \ell(\Sigma^\ast\sigma^{-1}\Lambda^{-s-1}\widehat h)\right| \le C\|\Lambda^s\widehat{g}\|  \, \|\Sigma^\ast\sigma^{-1}\Lambda^{-s-1}\widehat h\| \,.
 \]
 Thanks to the Hahn-Banach and Riesz theorems, there exists a unique $ w\in L^2(\R^2) $ such that
 \[
\langle w,\Sigma^\ast\sigma^{-1}\Lambda^{-s-1}\widehat h \rangle = \ell(\Sigma^\ast\sigma^{-1}\Lambda^{-s-1}\widehat h)\,, \qquad
 \|w\|= \|\ell\|_{\Lc(\Rc)} \le C\|\Lambda^s\widehat{g}\| \,.
  \]
Defining $ \widehat f:= \sigma^{-1}\Lambda^{-s-1}w$ we get $ \sigma\Lambda^{s+1}\widehat f= w\in L^2(\R^2)$, i.e. $ f\in H^{s+1,\sigma}_\gamma(\R^2) $ such that
\[
\langle \Sigma\widehat f, \widehat h \rangle = \langle \widehat f, \Sigma^\ast\widehat h \rangle= \langle \widehat{g},\widehat{h} \rangle \,,
 \]
for all $h$ such that $\sigma^{-1}\Lambda^{-s+1}\widehat h\in L^{2}(\R^2)$, which shows that $ f $ is a solution of equation \eqref{def_g}. Moreover
\[
\|f\|_{H^{s+1,\sigma}_\gamma(\R^2)}=\frac{1}{2\pi}\|\sigma\Lambda^{s+1}\widehat{f}\|=\frac{1}{2\pi}\|w\| \le C\|\Lambda^s\widehat{g}\|=C\|g\|_{H^{s}_\gamma(\R^2)}\,,
 \]
 that is \eqref{stimafg}. The uniqueness of the solution follows from the linearity of the problem and the a priori estimate.
\end{proof}

\begin{proof}[Proof of Theorem \ref{teoexist}]
Equation \eqref{equ_f} has the form of \eqref{def_g} with 
\[ \widehat g(\tau,\eta)=-\frac{\mu^+\mu^-}{\mu^++\mu^-}\,M \, , \]
for $M$ defined in \eqref{def_M}. We already know from \cite{MST18} that the assumptions on $\F^\pm$ yield $g\in H^s_\gamma(\R^2)$ and that $g$ satisfies the estimate
	\begin{equation}\label{stimagF}
	\begin{array}{ll}
	\ds \|g\|_{H^s_\gamma(\R^2)}^2
	\le \frac{C}{\gamma}\left( \|\F^+\|^2_{L^2(\R^+;H^s_\gamma(\R^2))}+\|\F^-\|^2_{L^2(\R^-;H^s_\gamma(\R^2))}\right) , \qquad\forall \gamma\ge1\, .
	\end{array}
	\end{equation}
Then, the conclusion of the proof follows from Theorem \ref{teoexistfg2} and \eqref{stimagF}.
\end{proof}

The following corollary of Theorem \ref{teoexist} shows that the existence result of \cite{MST18} is a simple consequence of the present result.
\begin{corollary}\label{corolla2}
Given the assumptions of Theorem \ref{teoexist}, there exists a unique solution $f\in H^{s+1}_\gamma(\R^2)$ of equation \eqref{equ_f}  satisfying the estimate
	\begin{equation}\label{stimafF1}
	\gamma^3 \|f\|^2_{H^{s+1}_\gamma(\R^2)} \le C\left( \|\F^+\|^2_{L^2(\R^+;H^s_\gamma(\R^2))}+\|\F^-\|^2_{L^2(\R^-;H^s_\gamma(\R^2))}\right) , \qquad\forall \gamma\ge1\, ,
	\end{equation}
	for a suitable constant $C>0$ independent of $\F^\pm$ and $\gamma$.
\end{corollary}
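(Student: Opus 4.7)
The plan is to derive Corollary \ref{corolla2} as a direct consequence of Theorem \ref{teoexist} together with the continuous embedding of the anisotropic space $H^{s+1,\sigma}_\gamma(\R^2)$ into the standard weighted Sobolev space $H^{s+1}_\gamma(\R^2)$, the latter being a specialization of inequality \eqref{stimaspazi}. Since the main existence statement has already been established in Theorem \ref{teoexist}, this corollary is essentially a matter of bookkeeping, trading a factor $\gamma$ for one degree of regularity.

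First, I would invoke Theorem \ref{teoexist} to obtain the unique solution $f\in H^{s+1,\sigma}_\gamma(\R^2)$ of \eqref{equ_f} satisfying
\[
\gamma \|f\|^2_{H^{s+1,\sigma}_\gamma(\R^2)} \le C\left( \|\F^+\|^2_{L^2(\R^+;H^s_\gamma(\R^2))}+\|\F^-\|^2_{L^2(\R^-;H^s_\gamma(\R^2))}\right).
\]
Next, I would apply the first inequality of \eqref{stimaspazi} with $s$ replaced by $s+1$, which reads
\[
\gamma\|f\|_{H^{s+1}_{\gamma}(\mathbb{R}^2)}\le C\|f\|_{H^{s+1,\sigma}_{\gamma}(\mathbb{R}^2)}.
\]
This inclusion shows in particular that the solution $f$ provided by Theorem \ref{teoexist} automatically belongs to $H^{s+1}_\gamma(\R^2)$. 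Squaring this inequality and multiplying by $\gamma$ yields
\[
\gamma^3\|f\|^2_{H^{s+1}_\gamma(\R^2)} \le C\,\gamma\|f\|^2_{H^{s+1,\sigma}_\gamma(\R^2)},
\]
so that combining with the estimate from Theorem \ref{teoexist} immediately gives the desired bound \eqref{stimafF1} of Corollary \ref{corolla2}.

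Uniqueness of $f$ in $H^{s+1}_\gamma(\R^2)$ follows from the linearity of equation \eqref{equ_f} and the a priori estimate just obtained: any two $H^{s+1}_\gamma$ solutions have vanishing right-hand sides $\F^\pm=0$ for their difference, hence vanishing $H^{s+1}_\gamma$-norm. There is no real obstacle in this argument; the only noteworthy point is that the extra power of $\gamma$ in \eqref{stimafF1} compared with the estimate of Theorem \ref{teoexist} reflects the strictly weaker control given by the isotropic norm $H^{s+1}_\gamma$ along the characteristic directions $\tau=\pm icY_2\eta$ where the anisotropic weight $\sigma$ degenerates, cf. Lemma \ref{propsigma}.
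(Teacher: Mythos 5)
Your proposal is correct and follows exactly the paper's own (one-line) argument: the paper also deduces the corollary from Theorem \ref{teoexist} combined with the first inequality in \eqref{stimaspazi} (applied with $s+1$ in place of $s$), which converts the anisotropic bound into the isotropic one at the cost of an extra factor of $\gamma^2$ after squaring. Your added remarks on uniqueness and on the meaning of the weight degeneration are consistent with the paper and require no changes.
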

\begin{proof}
The result follows directly from Theorem \ref{teoexist} and the first inequality in \eqref{stimaspazi}.
\end{proof}

\subsection*{Acknowledgement}
The research was supported in part by the Italian research
project PRIN 2015YCJY3A-004.
The author would like to warmly thank Tao Wang for support and hospitality during
his visit at Wuhan University where this work was initiated.

%%%%%%%%%%%%%%%%%%%%%%%%%%%%%%%%%%%%
%%%%%%%%%%%%%%%%%%%%%%%%%%%%%%%%%%%%%

%\bibliographystyle{abbrvnat}

%\bibliography{evolution_equ}

\end{document}